\undefined\RequirePackage{dsfont}\fi
\undefined\RequirePackage{amsmath,amsfonts,amssymb,amsthm}\fi
\title{{\Large \bfseries{Derivatives of Feynman-Kac semigroups}}}
\author{ }
\author{James Thompson%
  \footnote{University of Luxembourg, Email: \texttt{james.thompson@uni.lu}}}
\date{\today}
\def\@MRExtract#1 #2!{#1}
\newcommand{\MR}[1]{
  \xdef\@MRSTRIP{\@MRExtract#1 !}%
  \href{http://www.ams.org/mathscinet-getitem?mr=\@MRSTRIP}{MR-\@MRSTRIP}}
\renewenvironment{thebibliography}[1]{%
  \section*{\refname
    \@mkboth{\MakeUppercase\refname}{\MakeUppercase\refname}}%
  \phantomsection%
  \addcontentsline{toc}{section}{\refname}%
  \list{\@biblabel{\@arabic\c@enumiv}}%
  {\settowidth\labelwidth{\@biblabel{#1}}%
    \small%
    \setlength{\labelsep}{0.4em}%
    \setlength{\leftmargin}{\labelwidth}%
    \addtolength{\leftmargin}{\labelsep}%
    \setlength{\itemsep}{-.25em}%
    \@openbib@code
    \usecounter{enumiv}%
    \let\p@enumiv\@empty
    \renewcommand\theenumiv{\@arabic\c@enumiv}}%
  \sloppy\clubpenalty4000\@clubpenalty\clubpenalty\widowpenalty4000%
  \sfcode`\.\@m}{%
  \def\@noitemerr{%
    \@latex@warning{Empty `thebibliography' environment}}%
  \endlist}
\let\mathbb=\mathds
\DeclareMathOperator{\divv}{div}
\DeclareMathOperator{\tr}{tr}
\DeclareMathOperator{\inj}{inj}
\DeclareMathOperator{\Aut}{Aut}
\def\<{\langle}
\def\>{\rangle}
\def\Ric{\mathop{\rm Ric}}
\def\id{\mathop{\rm id}}
\def\V{\mathbb V}
\newcommand{\E}{\mathbb{E}}
\newtheorem{theorem}{Theorem}[section]
\newtheorem{lemma}[theorem]{Lemma}
\newtheorem{corollary}[theorem]{Corollary}
\newtheorem{remark}[theorem]{Remark}
\begin{document}

\maketitle

\begin{abstract}%
   \noindent%
   {We prove Bismut-type formulae for the first and second derivatives of a Feynman-Kac semigroup on a complete Riemannian manifold. We derive local estimates and give bounds on the logarithmic derivatives of the integral kernel. Stationary solutions are also considered. The arguments are based on local martingales, although the assumptions are purely geometric.}\\[1em]%
   {\footnotesize%
     \textbf{Keywords: }{Brownian motion ; Feynman-Kac ; Bismut}\par%
     \noindent\textbf{AMS MSC 2010: }%
     {47D08 ; 53B20; 58J65 ; 60J65}\par
   }
\end{abstract}

\section{Introduction}

Suppose $M$ is a complete Riemannian manifold of dimension $n$ with Levi-Civita connection $\nabla$. Denote by $\Delta$ the Laplace-Beltrami operator, suppose $Z$ is a smooth vector field and set $L:=\frac{1}{2}\Delta +Z$. Any elliptic diffusion operator on a smooth manifold induces, via its principle symbol, a Riemannian metric with respect to which it takes this form. Denote by $x_t$ a diffusion on $M$ starting at $x_0 \in M$ with generator $L$ and explosion time $\zeta(x_0)$. The explosion time is the random time at which the process leaves all compact subsets of $M$. Suppose $V:[0,\infty) \times M \rightarrow \mathbb{R}$ is a smooth function which is bounded below and denote by $P^V_t f$ the associated Feynman-Kac semigroup, acting on bounded measurable functions $f$. For $T>0$ fixed, $P^V_tf$ is smooth and bounded on $(0,T] \times M$, satisfies the parabolic equation
\begin{equation}\label{eq:shrod}
\partial_t \phi_t = (L - V_t)\phi_t
\end{equation}
on $(0,T] \times M$ with $\phi_0 = f$ and for
\begin{equation}\label{eq:defnVt}
\V_t := e^{-\int_0^t V_{T-s}(x_s) ds}
\end{equation} 
is represented probabilistically by the Feynman-Kac formula
\begin{equation}\label{eq:fkform}
P^V_Tf(x_0) = \E \left[ \V_T f(x_T)\mathbf{1}_{\lbrace T < \zeta(x_0)\rbrace} \right].
\end{equation}
In the self-adjoint case, equation \eqref{eq:shrod} corresponds (via Wick rotation) to the Schrödinger equation for a single non-relativistic particle moving in an electric field in curved space. In this sense, the derivative $d P^V_Tf$ corresponds to the momentum of the particle and $L P^V_Tf$ the kinetic energy.

In this article, we prove probabilistic formulae and estimates for $d P^V_Tf$, $L P^V_Tf$ and $\nabla d P^V_Tf$.  In doing so, we extend results in \cite{Thalmaier97} (by including $V$) and in \cite{APT} (by including $Z$ and $V$). In each case, we allow for unbounded and time-dependent $V$. Our approach is more concise than that of \cite{APT}, since we avoid the extrinsic argument in favour of the differential Bianchi identity. Our results imply new Bismut-type formulae for the derivatives of the heat kernel in the \textit{forward} variable (see, for example, Corollary \ref{cor:forbis}).

Our formula for $dP^V_Tf$ is given by Theorem \ref{thm:locformone}. For $v \in T_{x_0}M$ it states
\begin{equation}
(dP^V_Tf)(v)  = - \mathbb{E}\left[ \V_T f(x_T)\mathbf{1}_{\lbrace T < \zeta(x_0)\rbrace} \int_0^{T} \langle W_s (\dot{k}_s v),//_sdB_s\rangle+dV_{T-s}(W_s (k_sv)) ds \right]
\end{equation}
where $//_t$ and $W_t$ are the usual parallel and damped parallel transports, respectively, and $B_t$ the martingale part of the antidevelopment of $x_t$ to $T_{x_0}M$. The process $k_t$ is chosen so that it vanishes once $x_t$ exits a regular domain (an open connected subset with compact closure and smooth boundary).  Imposing this condition on $k_t$ obviates the need for any assumptions on ${\Ric}_Z$. Conversely, if we assume ${\Ric}_Z$ is bounded below then we can choose $k_t = (T-t)/T$ and our formula for $dP^V_tf$ reduces to that of \cite[Theorem~5.2]{ElworthyLi}. Formulae in \cite{ElworthyLi} are derived from the assumption that one can differentiate under the expectation, and thus require global assumptions. Our approach, on the other hand, follows that of \cite{Thalmaier97} and \cite{APT} in using local martingales to obtain local formula for which no assumptions are needed.

Our formula for $LP^V_Tf$ is given by Theorem \ref{thm:formhalf}. It states
\begin{equation}
\begin{split}
&L(P^V_Tf)(x_0) \\
=\text{ }&\E\left[ \V_T f(x_T)\mathbf{1}_{\lbrace T<\zeta(x_0)\rbrace}\int_0^T \langle \dot{k}_s Z,//_s dB_s\rangle \right]\\
& +\frac{1}{2}\E\left[ \V_T f(x_T)\mathbf{1}_{\lbrace T<\zeta(x_0)\rbrace}\left( \int_0^T \<  W_s \dot{l}_s ,//_s dB_s\> + dV_{T-s}(W_s l_s)ds\right)\int_0^T \dot{k}_s W_s^{-1} //_s dB_s\right]
\end{split}
\end{equation}
where the processes $k$ and $l$ are assumed to vanish outside of a regular domain. A formula for $\Delta P_T $ (acting on differential forms) was previously given in \cite{EL3}, for the case of a compact manifold with $Z=0$ and $V=0$.

Our formula for $\nabla dP_tf$ is given by Theorem \ref{thm:locformtwo}. For $v,w \in T_{x_0}M$ it states
\begin{equation}
\begin{split}
&(\nabla d P^V_Tf)(v,w) \\
=\text{ }&  - \E \left[ \V_T f(x_T)\mathbf{1}_{\lbrace T < \zeta(x_0)\rbrace}\int_0^T\langle  W^{\prime}_s(\dot{k}_sv,w), //_s dB_s\rangle\right]\\
&-\E\left[ \V_T f(x_T)\mathbf{1}_{\lbrace T < \zeta(x_0)\rbrace}\int_0^T ((\nabla d V_{T-s})(W_s(k_sv),W_s(w))+(dV_{T-s})(W^{\prime}_s(k_sv,w)))ds\right]\\
&+\E\left[ \V_T f(x_T)\mathbf{1}_{\lbrace T < \zeta(x_0)\rbrace}\left(\int_{0}^T \< W_s( \dot{l}_sw),//_s dB_s\> + dV_{T-s}( W_s(l_sw)) ds\right)\right.\\
&\quad\quad\quad\quad\quad\quad\quad\quad\quad\quad\quad\quad\quad\quad\quad\left.\cdot\left(\int_0^{T} \<W_s (\dot{k}_sv),//_sdB_s\>+ dV_{T-s}(W_s(k_sv))ds\right)\right]
\end{split}
\end{equation}
where $W^{\prime}_t$ solves a covariant It\^{o} equation determined by the curvature tensor and its derivative. This extends \cite[Theorem~2.1]{APT} while avoiding the use of a stochastic differential equation.

The formulae mentioned above are derived in Section \ref{sec:localform}. Solutions to the time independent equation
\begin{equation}
(L - V)\phi = -E \phi
\end{equation}
with $E \in \mathbb{R}$ are subject to a similar analysis, as outlined in Section \ref{sec:statsolns}. In Section \ref{sec:localest} we derive local estimates, using the formulae of Section \ref{sec:localform} and local assumptions on curvature and the derivative of the potential function. We do so by choosing the processes $k$ and $l$ appropriately, as in \cite{Thalmaier97} and \cite{APT}, and applying the Cauchy-Schwarz inequality. These local estimates are given by Theorems \ref{thm:locestone}, \ref{thm:locestonehalf} and \ref{thm:locesttwo}; global estimates are then given as corollaries. The global estimates are derived under appropriate global assumptions and imply the boundedness of $dP^V_tf$, $LP^V_tf$ and $\nabla dP^V_tf$ on $[\epsilon,T]\times M$. These bounds lead to the non-local formulae of Section \ref{sec:globform}, in which the processes $k$ and $l$ are chosen deterministically. For the case in which $Z$ is a gradient, estimates on the logarithmic derivatives of the integral kernel can then be derived, using Jensen's inequality. They are given in Section \ref{sec:kerests} and extend those of \cite{HsuEstimates} and \cite{StroockTuretsky}.

\section{Local Formulae}\label{sec:localform}

For the remainder of this article, we fix $T>0$ and set $f_t := P^V_{T-t}f$.

\subsection{Gradient}\label{ss:firstderlocform}

Denote by ${\Ric}^\sharp_Z := {\Ric}^\sharp - 2\nabla Z$ the Bakry-Emery tensor (see \cite{BL2}). Then the damped parallel transport $W_t: T_{x_0}M \rightarrow T_{x_t}M$ is the solution, along the paths of $x_t$, to the covariant ordinary differential equation
\begin{equation}\label{eq:eqfordpt}
DW_t = - \frac{1}{2}{\Ric}^\sharp_Z W_t
\end{equation}
with $W_0 = \id_{T_{x_0}M}$. Suppose $D$ is a regular domain in $M$ with $x_0 \in D$ and denote by $\tau$ the first exit time of $x_t$ from $D$.

\begin{lemma}\label{lem:locmartone}
Suppose $v \in T_{x_0}M$ and that $k$ is a bounded adapted process with paths in the Cameron-Martin space $L^{1,2}([0,T];\Aut(T_{x_0}M))$ such that $k_t = 0$ for $t \geq T-\epsilon$. Then
\begin{equation}\label{eq:locmartingone}
\V_t df_t (W_t (k_tv)) -\V_tf_t(x_t)\int_0^t \< W_s (\dot{k}_sv),//_s dB_s\> -\int_0^t \V_sf_s(x_s)dV_{T-s}(W_s (k_sv)) ds
\end{equation}
is a local martingale on $[0,\tau \wedge T)$.
\end{lemma}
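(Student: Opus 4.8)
The plan is to show that the process in \eqref{eq:locmartingone} has vanishing bounded-variation part by differentiating each term using Itô calculus, where the differentiation is carried out on a sequence of compact time intervals bounded away from the explosion time (which is why we only need a local martingale on $[0,\tau\wedge T)$). First I would recall the basic stochastic calculus ingredients: $f_t = P^V_{T-t}f(x_0)$ satisfies the parabolic equation \eqref{eq:shrod}, which in the time-reversed variable reads $\partial_t f_t = -(L - V_{T-t})f_t$; the damped parallel transport satisfies $DW_t = -\tfrac12 \Ric^\sharp_Z W_t$ with $W_0 = \id$; and the antidevelopment $B_t$ is the $T_{x_0}M$-valued Brownian motion (up to the drift coming from $Z$) with $//_t$ the parallel transport along $x_t$.

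Next I would compute the Itô differential of the first term $\V_t\, df_t(W_t(k_tv))$ using the product rule on the three factors $\V_t$, the one-form $df_t$ evaluated along the process, and the tangent vector $W_t(k_tv)$. For the factor $df_t(W_t\,\cdot\,)$ one uses the standard fact (the "derivative formula" computation underlying Bismut's approach, cf. \cite{Thalmaier97}) that, because $f_t$ solves \eqref{eq:shrod}, the process $s\mapsto \V_s\, df_s(W_s u_s)$ for a covariantly constant direction $u$ has drift governed precisely by the potential term: the Weitzenböck/commutation identity $d(Lg) = (L - \tfrac12\Ric_Z^\sharp)\,dg$ for functions $g$ combines with the ODE for $W_t$ so that the curvature contributions cancel, leaving only the $dV$-contribution. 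Concretely, $d\big(\V_t\, df_t(W_t u_t)\big)$ has finite-variation part $-\V_t\, dV_{T-t}(W_t u_t)\,dt$ plus the term coming from $\dot u_t$, i.e. $\V_t\, df_t(W_t \dot u_t)\,dt$, and a martingale part. Substituting $u_t = k_t v$ and hence $\dot u_t = \dot k_t v$, this yields a drift $-\V_t\, dV_{T-t}(W_t(k_tv))\,dt + \V_t\, df_t(W_t(\dot k_t v))\,dt$ together with the martingale differential, which by Itô's formula for functions is $\V_t \nabla df_t(W_t(k_tv), //_t dB_t) + \V_t\, df_t(\text{martingale part of } D(W_t k_t v))$ — and the key point is that the martingale part of $df_t$ evaluated against $//_t dB_t$ is $\V_t (\Hess\text{-type})$, but more simply one uses that $\V_t f_t(x_t)$ itself is (up to the correction) a local martingale, so that the term $\V_t f_t(x_t)\langle W_s(\dot k_s v), //_s dB_s\rangle$ is designed to cancel exactly the finite-variation cross-term arising when one recognizes $\V_t\, df_t(W_t(\dot k_t v))\,dt$ via an integration-by-parts against the martingale $\int_0^t\langle W_s(\dot k_s v), //_s dB_s\rangle$.

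More carefully, the strategy for handling the middle term is: let $N_t := \int_0^t \langle W_s(\dot k_s v), //_s dB_s\rangle$, a local martingale, and observe that $\V_t f_t(x_t)$ is itself, by the Feynman-Kac representation \eqref{eq:fkform} and the parabolic equation, a local martingale on $[0,\tau\wedge T)$ with Itô differential $\V_t\, df_t(//_t dB_t)$ (the $LV$ and $\partial_t$ drifts cancelling). Then Itô's product rule gives $d(\V_t f_t(x_t) N_t) = N_t\, d(\V_t f_t(x_t)) + \V_t f_t(x_t)\, dN_t + d[\V f, N]_t$, and the quadratic covariation is $d[\V f, N]_t = \V_t\, df_t\big(W_t(\dot k_t v)\big)\,dt$ because $\langle //_t dB_t, \langle W_t(\dot k_t v), //_t dB_t\rangle\rangle = W_t(\dot k_t v)\,dt$ (parallel transport being an isometry). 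This is exactly the drift term produced in the previous paragraph, so the sum of all three terms in \eqref{eq:locmartingone} has zero finite-variation part on each compact subinterval of $[0,\tau\wedge T)$, and is therefore a local martingale there. The hypothesis $k_t = 0$ for $t \geq T - \epsilon$ ensures the first term $\V_t\, df_t(W_t(k_t v))$ is well-behaved up to time $T$ despite the possible singularity of $df_t$ as $t\uparrow T$.

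The main obstacle, and the step requiring the most care, is establishing the drift computation for $\V_t\, df_t(W_t u_t)$ rigorously — in particular justifying that the curvature terms from Itô's formula applied to the one-form $df_t$ (which bring in $\Hess$ and the Weitzenböck curvature term $\Ric_Z$) cancel against the $-\tfrac12\Ric_Z^\sharp$ term in $DW_t$, using the commutation relation $dL = (L - \tfrac12\Ric^\sharp_Z)d$ applied to $f_t$ together with the time-derivative equation $\partial_t f_t = -(L-V_{T-t})f_t$; the $dV$ term survives precisely because $V$ does not commute through $d$ in the same way — one gets $d((L-V_{T-t})f_t) = (L - \tfrac12\Ric^\sharp_Z)df_t - dV_{T-t}\, f_t - V_{T-t}\, df_t$, and after pairing with $W_t$ and using its ODE the $V_{T-t}\,df_t$ piece is absorbed by $d\V_t = -V_{T-t}(x_t)\V_t\,dt$, leaving the $-\V_t\, dV_{T-t}(W_t u_t)\,dt$ drift. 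All computations are local, performed on $\{\tau \wedge T > \cdot\} \cap [0, T-\epsilon']$ for $\epsilon' < \epsilon$ and then extended, so no global geometric hypotheses enter.
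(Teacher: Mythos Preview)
Your approach is essentially the same as the paper's: compute the drift of $\V_t\,df_t(W_t(k_tv))$ via the Weitzenb\"ock-type commutation $dL=(\tfrac12\tr\nabla^2+\nabla_Z-\tfrac12{\Ric}^\sharp_Z)d$, let the $-\tfrac12{\Ric}^\sharp_Z$ piece cancel against $DW_t$, absorb $V_{T-t}\,df_t$ into $d\V_t$, and then trade the remaining $\V_t\,df_t(W_t(\dot k_tv))\,dt$ for the middle term of \eqref{eq:locmartingone} via the quadratic covariation of the local martingales $\V_tf_t(x_t)$ and $\int_0^t\langle W_s(\dot k_sv),//_sdB_s\rangle$. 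The paper does exactly this, phrasing the last step as an ``integration by parts'' using $\V_tf_t(x_t)=f_0(x_0)+\int_0^t\V_s\,df_s(//_sdB_s)$.

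There is, however, a computational slip in your stated drift. You write that $d\big(\V_t\,df_t(W_tu_t)\big)$ has finite-variation part $-\V_t\,dV_{T-t}(W_tu_t)\,dt+\V_t\,df_t(W_t\dot u_t)\,dt$. In fact the $dV$-contribution comes with a \emph{plus} sign and carries the factor $f_t(x_t)$: since $\partial_t f_t=-(L-V_{T-t})f_t$, one has $\partial_t(df_t)=-(L-\tfrac12{\Ric}^\sharp_Z)df_t+f_t\,dV_{T-t}+V_{T-t}\,df_t$, and after the cancellations the drift of $\V_t\,df_t(W_tu_t)$ is $+\V_t\,f_t(x_t)\,dV_{T-t}(W_tu_t)\,dt+\V_t\,df_t(W_t\dot u_t)\,dt$. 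This is exactly what is needed for the third term $-\int_0^t\V_sf_s(x_s)\,dV_{T-s}(W_s(k_sv))\,ds$ in \eqref{eq:locmartingone} to cancel it; with your sign and missing $f_t(x_t)$ the cancellation would fail. Once this is corrected, your argument goes through and coincides with the paper's.
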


\begin{proof}
Setting $N_t(v) := df_t(W_t(v))$ we see by It\^{o}'s formula and the relations
\begin{equation}
\begin{split}
d \Delta f =& \tr \nabla^2 df - df({\Ric}^\sharp)\\
dZf = & \nabla_Z df + df(\nabla Z)\\
dV_tf = & f dV_t + V_t df
\end{split}
\end{equation}
(the first one is the Weitzenb\"{o}ck formula) that
\begin{equation}
\begin{split}
dN_t(v) \stackrel{m}{=}\text{ }& df_t(DW_t(v))dt+(\partial_t df_t)(W_t(v))dt + \left(\frac{1}{2}\tr \nabla^2 + \nabla_Z\right)(df_t)(W_t(v))dt \\
=\text{ }& V_{T-t} N_t(v) dt +f_t(x_t) dV_{T-t}(W_t(v))dt
\end{split}
\end{equation}
where $\stackrel{m}{=}$ denotes equality modulo the differential of a local martingale. Recalling the definition of $\V_t$ given by equation \eqref{eq:defnVt}, it follows that
\begin{equation}
d(\V_t N_t(k_t v) ) \stackrel{m}{=} \V_t N_t(\dot{k}_t v) dt + \V_t f_t(x_t) dV_{T-t}(W_t( k_tv))dt
\end{equation}
so that
\begin{equation}
\V_t N_t( k_tv) -\int_0^t\V_s df_s(W_s(\dot{k}_sv))ds - \int_0^t \V_sf_s(x_s) dV_{T-s}(W_s(k_sv))ds 
\end{equation}
is a local martingale. By the formula
\begin{equation}
\V_t f_t(x_t) = f_0(x_0) + \int_0^t \V_s df_s(//_s dB_s)
\end{equation}
and integration by parts we see that
\begin{equation}
\int_0^t\V_s df_s( W_s(\dot{k}_sv))ds - \V_t f_t(x_t) \int_0^t \langle W_s (\dot{k}_sv),//_sdB_s\rangle
\end{equation}
is also a local martingale and so the lemma is proved.
\end{proof}

\begin{theorem}\label{thm:locformone}
Suppose $x_0\in D$ with $v \in T_{x_0}M$, $f \in \mathcal{B}_b$, $V$ bounded below and $T>0$. Suppose $k$ is a bounded adapted process with paths belonging to the Cameron-Martin space $L^{1,2}([0,T];\Aut(T_{x_0}M))$, such that $k_0=1$, $k_t=0$ for $t \geq \tau \wedge T$ and $\int_0^{\tau \wedge T} |\dot{k}_s|^2ds \in L^1$. Then
\begin{equation}\label{eq:formulalocone}
(dP^V_Tf)(v)  = - \mathbb{E}\left[ \V_T f(x_T)\mathbf{1}_{\lbrace T < \zeta(x_0)\rbrace} \int_0^{T} \langle W_s (\dot{k}_s v),//_sdB_s\rangle+dV_{T-s}(W_s (k_sv)) ds \right].
\end{equation}
\end{theorem}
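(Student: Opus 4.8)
The plan is to read the formula off the local martingale of Lemma~\ref{lem:locmartone}, evaluated at time zero, via an optional-stopping and limiting argument in the spirit of \cite{Thalmaier97}. Since Lemma~\ref{lem:locmartone} asks that $k$ vanish on an interval $[T-\epsilon,T]$, whereas here $k$ need only vanish on $[\tau\wedge T,T]$, I would first reduce to the former case. Given $k$ as in the theorem, set $k^{(\epsilon)}_t:=\psi_\epsilon(t)\,k_t$ with $\psi_\epsilon$ a fixed Lipschitz cut-off equal to $1$ on $[0,T-2\epsilon]$, vanishing on $[T-\epsilon,T]$, and with $|\dot\psi_\epsilon|\le 1/\epsilon$. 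Each $k^{(\epsilon)}$ is admissible for both Lemma~\ref{lem:locmartone} and the theorem, and because $k_{\tau\wedge T}=0$ and $k$ has continuous paths, the Cauchy--Schwarz bound $\sup_{[T-2\epsilon,T]}|k|^2\le 2\epsilon\int_0^{\tau\wedge T}|\dot k_s|^2\,ds$ gives $\int_0^T|\dot k^{(\epsilon)}_s-\dot k_s|^2\,ds\to0$ as $\epsilon\downarrow0$, dominated in $L^1(\Omega)$ uniformly in $\epsilon$ by the integrability hypothesis. The right-hand side of \eqref{eq:formulalocone} is continuous along this convergence: the $ds$-integral converges by dominated convergence (on the compact closure $\overline D$ the tensor ${\Ric}^\sharp_Z$, hence $W$, and $dV$ are bounded), while the It\^o integral converges in $L^2$ by its isometry together with these bounds and the $L^1$ hypothesis, the multiplying factor $\V_sf_s(x_s)$ being bounded. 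So it suffices to treat $k$ that additionally vanish on some $[T-\epsilon,T]$.

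Fix such a $k$, let $M$ be the local martingale \eqref{eq:locmartingone} on $[0,\tau\wedge T)$, and let $(\sigma_j)$ with $\sigma_j\uparrow\tau\wedge T$ reduce it. Since $\V_0=1$, $W_0=\id$, $k_0=1$ and $f_0=P^V_Tf$, we have $M_0=(dP^V_Tf)(v)$, so $\E[M_{\sigma_j}]=(dP^V_Tf)(v)$ for every $j$. The point is then to let $j\to\infty$. The leading term $\V_{\sigma_j}\,df_{\sigma_j}(W_{\sigma_j}(k_{\sigma_j}v))$ is zero once $\sigma_j\ge T-\epsilon$, and on $\{\sigma_j<T-\epsilon\}$ it is dominated by $\sup_{[0,T-\epsilon]\times\overline D}|dP^V_{T-\cdot}f|<\infty$ --- finite by the smoothness and boundedness of the Feynman--Kac semigroup on $(0,T]\times M$ and compactness of $\overline D$ --- and it tends to $0$ almost surely, being eventually $0$ or else of order $O(|k_{\sigma_j}|)$ with $k_{\sigma_j}\to k_{\tau\wedge T}=0$. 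The second term $\V_{\sigma_j}f_{\sigma_j}(x_{\sigma_j})\int_0^{\sigma_j}\langle W_s(\dot{k}_s v),//_sdB_s\rangle$ is a bounded process times a stochastic integral bounded in $L^2$ (It\^o isometry, boundedness of $W$ on $\overline D$, and the $L^1$ hypothesis), hence bounded in $L^2$; the remaining $ds$-term is bounded since $dV$, $W$ and $k$ are. Therefore $\{M_{\sigma_j}\}_j$ is uniformly integrable, it converges a.s., and passing to the limit yields, with $N_t:=\V_tf_t(x_t)$,
\begin{multline*}
(dP^V_Tf)(v)=-\E\biggl[N_{\tau\wedge T}\int_0^{\tau\wedge T}\langle W_s(\dot{k}_s v),//_sdB_s\rangle\\
+\int_0^{\tau\wedge T}N_s\,dV_{T-s}(W_s(k_s v))\,ds\biggr].
\end{multline*}

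Finally, since $k_s=\dot k_s=0$ on $[\tau\wedge T,T]$ both integrals may be extended to $[0,T]$, and each is then $\mathcal F_{\tau\wedge T}$-measurable. By the identity $N_t=f_0(x_0)+\int_0^t\V_s\,df_s(//_sdB_s)$ recalled in the proof of Lemma~\ref{lem:locmartone}, together with the below-boundedness of $V$ and boundedness of $f$, the process $N$ is a bounded martingale, and by the Markov property and \eqref{eq:fkform} it equals $\E[\V_Tf(x_T)\mathbf 1_{\{T<\zeta(x_0)\}}\mid\mathcal F_t]$; optional stopping gives $N_{\tau\wedge T}=\E[\V_Tf(x_T)\mathbf 1_{\{T<\zeta(x_0)\}}\mid\mathcal F_{\tau\wedge T}]$, and the tower property then moves $\V_Tf(x_T)\mathbf 1_{\{T<\zeta(x_0)\}}$ past the $\mathcal F_{\tau\wedge T}$-measurable integrals, producing \eqref{eq:formulalocone}. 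I expect the main obstacle to be the uniform-integrability bookkeeping in the limit $j\to\infty$ --- in particular, ruling out any surviving contribution of the leading term despite the possible blow-up of $dP^V_sf$ as $s\downarrow0$, which is exactly what the reduction to $k$ vanishing near $T$ secures.
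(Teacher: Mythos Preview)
Your proof is correct and follows essentially the same route as the paper's: approximate $k$ by a cut-off $k^{(\epsilon)}$ vanishing on $[T-\epsilon,T]$, read off the formula from the local martingale of Lemma~\ref{lem:locmartone} combined with the strong Markov property, and then let $\epsilon\downarrow 0$. Your treatment is more detailed than the paper's (which defers to \cite{Thalmaier97}); one small point you gloss over is that in your penultimate display the $ds$-term carries $N_s$ rather than $N_{\tau\wedge T}$, so the tower property must be applied at each $s$ (via Fubini) rather than only at $\tau\wedge T$---but this is routine.
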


\begin{proof}
As in the proof of \cite[Theorem~2.3]{Thalmaier97}, the process $k_t$ can be modified to $k^\epsilon_t$ so that $k^\epsilon_t = k_t$ for $t \leq \tau \wedge (T-2\epsilon)$ and $k^\epsilon_t = 0$ for $t \geq \tau \wedge (T-\epsilon)$, cutting off appropriately in between. Since $(df_t)_x$ is smooth and therefore bounded for $(t,x) \in [0,T-\epsilon]\times D$, it follows from Lemma \ref{lem:locmartone} and the strong Markov property that formula \eqref{eq:formulalocone} holds with $k^\epsilon_t$ in place of $k_t$. The result follows by taking $\epsilon \downarrow 0$.
\end{proof}

Denoting by $p^Z_T(x,y)$ the transition density of the diffusion with generator $L$, using Theorem \ref{thm:locformone} we can easily obtain the following Bismut formula, for the derivative of $p^Z_T(x,y)$ the in the \textit{forward} variable $y$.

\begin{corollary}\label{cor:forbis}
Suppose $x_0\in D$ with $\divv Z$ bounded below and $k$ as in Theorem \ref{thm:locformone}. Then
\begin{equation}
d\log p^Z_T(y,\cdot)_{x_0} = -\frac{\mathbb{E}\left[ e^{-\int_0^T \divv Z(x_s)ds} \int_0^{T} \langle W_s \dot{k}_s,//_sdB_s\rangle+d (\divv Z)(W_s k_s) ds \big\vert x_T = y\right]}{\mathbb{E}\left[ e^{-\int_0^T \divv Z(x_s)ds} \big\vert x_T = y\right]}
\end{equation}
where here $x_t$ is a diffusion on $M$ with generator $\frac{1}{2}\Delta -Z$ starting at $x_0$.
\end{corollary}

\begin{proof}
According to the Fokker-Planck equation, we have
\begin{equation}
p^Z_T(x,y) = p^{-Z,-\divv Z}_T(y,x)
\end{equation}
where $p^{-Z,-\divv Z}_T(y,x)$ denotes the minimal integral kernel for the semigroup generated by the operator $L^\ast = \frac{1}{2}\Delta -Z -\divv Z$. The result is therefore obtained simply by conditioning in Theorem \ref{thm:locformone}, having replaced $Z$ with $-Z$ and $V$ with $\divv Z$.
\end{proof}

\subsection{Generator}\label{ss:secderlocformhalf}

Now suppose $D_1$ and $D_2$ are regular domains with $x_0 \in D_1$ and $\overline{D_1}\subset D_2$. Denote by $\sigma$ and $\tau$ the first exit times of $x_t$ from $D_1$ and $D_2$, respectively.

\begin{lemma}\label{lem:locmartonehalf}
Suppose $x_0 \in D_1$ and $0<S<T$ and that $k,l$ are bounded adapted processes with paths in the Cameron-Martin space $L^{1,2}([0,T];\mathbb{R})$ such that $k_s=0$ for $s \geq \sigma \wedge S$, $l_s = 1$ for $s \leq \sigma \wedge S$ and $l_s = 0$ for $s \geq \tau \wedge (T-\epsilon)$. Then
\begin{equation}\label{eq:locmartingonehalf} 
\begin{split}
&\V_t(L f_t)(x_t)k_t -\frac{1}{2}\V_t df_t\left(W_t l_t\int_0^t \dot{k}_s W_s^{-1} //_s dB_s\right)\\
&\quad+\frac{1}{2}\V_t f_t(x_t)\int_0^t \< W_s \dot{l}_s,//_s dB_s\>\int_0^t \dot{k}_s W_s^{-1} //_s dB_s\\
&\quad\quad +\frac{1}{2}\int_0^t \V_s f_s(x_s) dV_{T-s}( W_sl_s) ds \int_0^t \dot{k}_s W_s^{-1}//_sdB_s\\
&\quad\quad\quad+\V_t f_t(x_t)\int_0^t \langle \dot{k}_s Z- k_s \nabla V_{T-s},//_s dB_s\rangle \\
&\quad\quad\quad\quad-\int_0^t \V_s f_s k_s LV_{T-s}ds
\end{split}
\end{equation}
is a local martingale on $[0,\tau \wedge T)$.
\end{lemma}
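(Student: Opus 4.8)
The plan is to prove the lemma by a direct application of It\^o's formula: I will compute the stochastic differential of each summand of \eqref{eq:locmartingonehalf}, collect all the bounded-variation (i.e.\ $dt$) contributions, and verify that they cancel. Since $f_t$, $df_t$, $\nabla df_t$, $W_t$ and $V$ together with its first two derivatives are smooth, hence bounded on $\overline{D_2}$ up to time $T-\epsilon$, what survives this cancellation is a continuous semimartingale with vanishing bounded-variation part, i.e.\ a local martingale on $[0,\tau\wedge T)$.

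The computation uses three facts. Two are already contained in the proof of Lemma~\ref{lem:locmartone}: (i) $\V_t f_t(x_t)$ is a local martingale, with $d(\V_t f_t(x_t)) = \V_t\,df_t(//_t dB_t)$, so that pairing it against any It\^o integral produces, via the cross-variation, a drift involving $df_t$; and (ii) for fixed $v\in T_{x_0}M$, $d\big(\V_t\,df_t(W_t v)\big) = \V_t(\nabla df_t)(//_t dB_t,W_t v) + \V_t f_t(x_t)\,dV_{T-t}(W_t v)\,dt$, so that the martingale part of $\V_t df_t(W_t\cdot)$ is built from the Hessian of $f_t$ while its drift is the potential term. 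The new ingredient is the evolution of $g_t := Lf_t$: from \eqref{eq:shrod} one has $\partial_t f_t = -(L-V_{T-t})f_t$, so applying $L$ and using $L(\phi\psi) = \phi L\psi + \psi L\phi + \langle\nabla\phi,\nabla\psi\rangle$ gives $\partial_t g_t = -Lg_t + V_{T-t}g_t + f_t\,LV_{T-t} + df_t(\nabla V_{T-t})$; hence, by It\^o's formula and cancellation of the $V_{T-t}g_t$-term against the drift of $\V_t$,
\[
d\big(\V_t (Lf_t)(x_t)\big)\;\stackrel{m}{=}\;\V_t\big(f_t\,LV_{T-t} + df_t(\nabla V_{T-t})\big)(x_t)\,dt .
\]

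Now set $A_t := \int_0^t \dot k_s W_s^{-1}//_s\,dB_s$ and split $Lf_t = \tfrac12\Delta f_t + df_t(Z)$. Differentiating $\V_t(Lf_t)(x_t)k_t$ produces, besides the drift just displayed times $k_t$, the term $\V_t(Lf_t)(x_t)\dot k_t\,dt$. Its $\tfrac12\Delta f_t$-part is exactly the drift generated by $-\tfrac12\V_t df_t(W_t l_t A_t)$ through the cross-variation of its martingale part (which by (ii) involves $\nabla df_t$) against the martingale increment $\dot k_t W_t^{-1}//_t\,dB_t$ of $A_t$, namely $\tfrac12\V_t l_t\dot k_t\,\tr(\nabla df_t)(x_t)\,dt = \tfrac12\V_t l_t\dot k_t\,\Delta f_t(x_t)\,dt$; the two cancel. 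Its $df_t(Z)$-part is matched, using (i), by the cross-variation of $\V_t f_t(x_t)$ against the $\dot k_s Z$-contribution of $\int_0^t\langle\dot k_s Z - k_s\nabla V_{T-s},//_s dB_s\rangle$, while the leftover $\nabla V_{T-t}$- and $LV_{T-t}$-drifts of $\V_t(Lf_t)(x_t)k_t$ are absorbed by the $-k_s\nabla V_{T-s}$-contribution of the same integral and by $-\int_0^t\V_s f_s k_s LV_{T-s}\,ds$ respectively. Differentiating $-\tfrac12\V_t df_t(W_t l_t A_t)$ further throws off the drifts $-\tfrac12\dot l_t\,\V_t df_t(W_t A_t)\,dt$ (from differentiating the $l_t$-factor) and $-\tfrac12\V_t f_t(x_t)\,dV_{T-t}(W_t l_t A_t)\,dt$ (the potential-drift in (ii)); these are removed, again via (i), by the drifts of $\tfrac12\V_t f_t(x_t)\int_0^t\langle W_s\dot l_s,//_s dB_s\rangle\,A_t$ and of $\tfrac12\int_0^t\V_s f_s(x_s)\,dV_{T-s}(W_s l_s)\,ds\,A_t$. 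For all these compensations to occur it is crucial that $\dot k$ and $\dot l$ have disjoint supports (forced by $k_s=0$ for $s\ge\sigma\wedge S$ and $l_s=1$ for $s\le\sigma\wedge S$), so that the two iterated It\^o integrals carry no mutual cross-variation. Finally, one first cuts $k$ and $l$ off so that they also vanish before $\tau\wedge(T-\epsilon)$ and then lets $\epsilon\downarrow0$, exactly as in the proof of Theorem~\ref{thm:locformone}.

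I expect the main difficulty to be purely organizational: drifts of the shapes $\dot k\,(\cdot)$, $\dot l\,(\cdot)$, the damped-transport drift $DW_t=-\tfrac12{\Ric}^\sharp_Z W_t$ hidden inside $W_t A_t$, and $dV_{T-t}(W_t\cdot)$ all occur, together with three distinct cross-variation terms, and one has to check that each is matched by exactly one compensating summand and that the two integration-by-parts steps mediated by (i) are applied consistently. The conceptual reason two Cameron--Martin processes are needed is that $(Lf_t)(x_t)$ carries the trace of the Hessian of $f_t$: to represent it without the Hessian one integrates by parts twice — once to pass from $\nabla df_t$ to $df_t$ (the role of $k$, supported on $[0,\sigma\wedge S]$) and once to pass from $df_t$ to $f_t$ (the role of $l$, supported on $[\sigma\wedge S,\tau\wedge(T-\epsilon)]$) — and the disjointness of the two supports is precisely what decouples these steps.
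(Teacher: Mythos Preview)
Your approach is essentially the paper's: both compute the drift of each summand via It\^o's formula using the same three ingredients (the local-martingale property of $\V_tf_t(x_t)$, the SDE for $\V_t\,df_t(W_t\cdot)$, and the evolution of $(Lf_t)(x_t)$) together with the disjoint supports of $\dot k$ and $\dot l$; the paper just organises the computation as a chain of intermediate local martingales, invoking Lemma~\ref{lem:locmartone} through the product $O^1_tO^2_t$, whereas you differentiate everything in one pass. Two small slips worth fixing: the cross-variation drift from $-\tfrac12\V_t\,df_t(W_t l_t A_t)$ is $-\tfrac12\V_t l_t\dot k_t\,\Delta f_t\,dt$ (your stated sign is reversed, though your conclusion ``they cancel'' against $+\tfrac12\V_t\dot k_t\Delta f_t\,dt$ is correct once $l_t=1$ on $\operatorname{supp}\dot k$), and the final $\epsilon\downarrow0$ cutoff you mention is not part of this lemma---the hypothesis already assumes $l_s=0$ for $s\ge\tau\wedge(T-\epsilon)$, and that limiting step belongs to the proof of Theorem~\ref{thm:formhalf}.
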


\begin{proof}
Defining
\begin{equation}
n_t := \left(L f_t \right)(x_t)
\end{equation}
we have, by It\^{o}'s formula, that
\begin{equation}
\begin{split}
d n_t =\text{ }& d(L f_t)_{x_t}//_t dB_t +\partial_t (L f_t)(x_t)dt + L(L f_t)(x_t) dt\\
=\text{ }& d(L f_t)_{x_t}//_t dB_t + L(V_{T-t}f_t)dt\\
=\text{ }&  d(L f_t)_{x_t}//_t dB_t + (L V_{T-t})f_t dt+ V_{T-t} n_t dt + \langle df_t, dV_{T-t}\rangle dt.
\end{split}
\end{equation}
It follows that
\begin{equation}
d( \V_t n_t k_t) \stackrel{m}{=} \V_t n_t \dot{k}_t  + k_t \V_t ( f_t L V_{T-t}+ \langle df_t, dV_{T-t}\rangle ) dt
\end{equation}
and so
\begin{equation}
\V_t(L f_t)(x_t)k_t - \int_0^t \V_s (L f_s)(x_s)\dot{k}_sds - \int_0^t \V_s k_s (f_s LV_{T-s} + \langle df_s, dV_{T-s}\rangle )ds
\end{equation}
is a local martingale, with
\begin{equation}
-(L f_t) (x_t)\dot{k}_tdt= \left(\frac{1}{2}d^\ast d-Z\right) f_t (x_t)\dot{k}_t dt= \left(\frac{1}{2}d^\ast (df_t)-(df_t)(Z)\right)\dot{k}_t dt.
\end{equation}
By the Weitzenbock formula
\begin{equation}
d((df_t)(W_t)) = (\nabla_{//_t dB_t}df_t)(W_t) -V_{T-t} (df_t)(W_t) dt +f_t(x_t) dV_{T-t}(W_t)dt
\end{equation}
from which it follows that
\begin{equation}
d(\V_t(df_t)(W_t)) = \V_t(\nabla_{//_t dB_t}df_t)(W_t) + \V_t f_t(x_t) dV_{T-t}(W_t)dt.
\end{equation}
Consequently, for an orthonormal basis $\lbrace e_i \rbrace_{i=1}^n$ of $T_{x_0}M$, by integration by parts we have
\begin{equation}\label{eqn:alocalmart}
\begin{split}
\V_t d^\ast (df_t) \dot{k}_tdt=&-\sum_{i=1}^n \V_t (\nabla_{//_t e_i} df_t)(//_t e_i) \dot{k}_tdt\\
\stackrel{m}{=}&-\V_t(\nabla_{//_t dB_t} df_t) (W_t\dot{k}_tW_t^{-1} //_t dB_t)\\
=&-d(\V_t df_t(W_t \int_0^t \dot{k}_sW_s^{-1} //_s dB_s))\\
&+d\left(\int_0^t \V_s f_s(x_s) dV_{T-s}(W_s)ds \int_0^t \dot{k}_sW_s^{-1} //_s dB_s\right).
\end{split}
\end{equation}
Furthermore
\begin{equation}
\int_0^t\V_s df_s( Z)\dot{k}_s ds - \V_t f_t(x_t) \int_0^t \langle \dot{k}_s Z  ,//_sdB_s\rangle
\end{equation}
is a local martingale and therefore
\begin{equation}
\begin{split}
&\int_0^t \V_s (L f_s)(x_s)\dot{k}_sds \\
&\quad -\frac{1}{2}\V_t df_t(W_t \int_0^t \dot{k}_sW_s^{-1} //_s dB_s)+\frac{1}{2} \int_0^t \V_s f_s(x_s) dV_{T-s}(W_s)ds \int_0^t \dot{k}_sW_s^{-1} //_s dB_s\\
&\quad\quad +\V_t f_t(x_t)\int_0^t \langle \dot{k}_s Z,//_s dB_s\rangle
\end{split}
\end{equation}
is also a local martingale. By the assumptions on $k$ and $l$ it follows from Lemma \ref{lem:locmartone} that
\begin{eqnarray}
O^1_t& =& \V_tdf_t(W_t((l_t-1))) -\mathbb{V}_tf_t(x_t)\int_0^t \< W_s( \dot{l}_s),//_s dB_s\> \\
& &\quad -\int_0^t \V_s f_s(x_s) dV_{T-s}( W_s((l_s-1))) ds,\\
O^2_t &= & \int_0^t \dot{k}_s W_s^{-1}//_sdB_s
\end{eqnarray}
are two local martingales. So the product $O^1_tO^2_t$ is also a local martingale, since $O^1 = 0$ on $[0,\sigma \wedge S]$ with $O^2$ constant on $[\sigma \wedge S,\tau \wedge (T-\epsilon))$. Consequently
\begin{equation}
\begin{split}
&- \V_t df_t\left(W_t\int_0^t \dot{k}_s W_s^{-1} //_s dB_s\right)+\V_t df_t\left(W_t l_t\int_0^t \dot{k}_s W_s^{-1} //_s dB_s\right)\\
&-\V_t f_t(x_t)\int_0^t \< W_s \dot{l}_s,//_s dB_s\>\int_0^t \dot{k}_s W_s^{-1} //_s dB_s\\
& -\int_0^t \V_s f_s(x_s) dV_{T-s}( W_s((l_s-1))) ds \int_0^t \dot{k}_s W_s^{-1}//_sdB_s
\end{split}
\end{equation}
is a local martingale and therefore so is
\begin{equation}
\begin{split}
&\V_t(L f_t)(x_t)k_t -\frac{1}{2}\V_t df_t\left(W_t l_t\int_0^t \dot{k}_s W_s^{-1} //_s dB_s\right)\\
&+\frac{1}{2}\V_t f_t(x_t)\int_0^t \< W_s \dot{l}_s,//_s dB_s\>\int_0^t \dot{k}_s W_s^{-1} //_s dB_s\\
& +\frac{1}{2}\int_0^t \V_s f_s(x_s) dV_{T-s}( W_sl_s) ds \int_0^t \dot{k}_s W_s^{-1}//_sdB_s\\
&+\V_t f_t(x_t)\int_0^t \langle \dot{k}_s Z,//_s dB_s\rangle\\
& - \int_0^t \V_s k_s (f_s LV_{T-s} + \langle df_s, dV_{T-s}\rangle )ds.
\end{split}
\end{equation}
Since
\begin{equation}
\int_0^t \V_s df_s(\nabla V_{T-s}) k_s ds - \V_t f_t(x_t) \int_0^t \langle k_s \nabla V_{T-s} ,//_sdB_s\rangle
\end{equation}
is a local martingale, the result follows.
\end{proof}

\begin{lemma}\label{lem:potlemma}
Suppose $x_0\in D_1$, $f \in \mathcal{B}_b$, $V$ bounded below and $0<S<T$. Suppose $k$ is a bounded adapted process with paths in the Cameron-Martin space $L^{1,2}([0,T];\mathbb{R})$ such that $k_s=0$ for $s \geq \sigma \wedge S$, $k_0=1$ and $\int_0^{\sigma \wedge S} |\dot{k}_s|^2ds \in L^1$. Then
\begin{equation}
V_T(x_0) P^V_Tf(x_0) = \E \left[ \V_T f(x_T)\mathbf{1}_{\lbrace T<\zeta(x_0)\rbrace} \int_0^T (k_s \dot{V}_{T-s}(x_s)-\dot{k}_s V_{T-s}(x_s))ds\right].
\end{equation}
\end{lemma}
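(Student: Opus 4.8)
The plan is the one behind the local formulae already proved: exhibit a local martingale on $[0,\tau\wedge T)$ whose value at time $0$ is $V_T(x_0)P^V_Tf(x_0)$ and whose expectation at the endpoint reproduces the right-hand side, and then remove the ``local'' qualifier by the cut-off argument of Theorem~\ref{thm:locformone}. Two facts do the work. First, the identity $\V_tf_t(x_t)=f_0(x_0)+\int_0^t\V_sdf_s(//_sdB_s)$ from the proof of Lemma~\ref{lem:locmartone}: since $V$ is bounded below and $f$ is bounded, the process $\V_tf_t(x_t)\mathbf{1}_{\lbrace t<\zeta(x_0)\rbrace}$ stopped at $\sigma\wedge S$ is a bounded, hence genuine, martingale, so that $\E[\V_Tf(x_T)\mathbf{1}_{\lbrace T<\zeta(x_0)\rbrace}\,X]=\E[\V_sf_s(x_s)X]$ for bounded $\mathcal{F}_s$-measurable $X$; by Fubini the right-hand side of the lemma equals $\E\bigl[\int_0^T\V_sf_s(x_s)\bigl(k_s\dot V_{T-s}(x_s)-\dot k_sV_{T-s}(x_s)\bigr)ds\bigr]$. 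Second, It\^o's formula for $x_t$, which since the generator is $L$ gives $d\bigl(V_{T-t}(x_t)\bigr)=dV_{T-t}(//_tdB_t)+\bigl((LV_{T-t})(x_t)-\dot V_{T-t}(x_t)\bigr)dt$.

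With these in hand I would apply It\^o's formula to $t\mapsto\V_tf_t(x_t)V_{T-t}(x_t)k_t$, whose value at $t=0$ is $V_T(x_0)P^V_Tf(x_0)$ and which vanishes at $t=T$ since $k_T=0$. Multiplying the martingale differential of $\V_tf_t(x_t)$ by $V_{T-t}(x_t)k_t$, the differential of $V_{T-t}(x_t)$ by $\V_tf_t(x_t)k_t$, and adding the covariation term $\V_tk_t\langle df_t,dV_{T-t}\rangle(x_t)\,dt$ together with the finite-variation term $\V_tf_t(x_t)V_{T-t}(x_t)\dot k_t\,dt$, one finds, modulo the differential of a local martingale,
\begin{equation}
d\bigl(\V_tf_t(x_t)V_{T-t}(x_t)k_t\bigr)\stackrel{m}{=}-\V_tf_t(x_t)\bigl(k_t\dot V_{T-t}(x_t)-\dot k_tV_{T-t}(x_t)\bigr)dt+\V_tk_t\bigl(f_t\,LV_{T-t}+\langle df_t,dV_{T-t}\rangle\bigr)(x_t)\,dt .
\end{equation}
The last group of terms is the combination $\langle df_t,dV_{T-t}\rangle+f_t\,LV_{T-t}=L(f_tV_{T-t})-V_{T-t}\,Lf_t$ that already occurred in Lemma~\ref{lem:locmartonehalf}; I would treat it in the same spirit, rewriting $\langle df_t,dV_{T-t}\rangle=df_t(\nabla V_{T-t})$ and using that $\int_0^t\V_sdf_s(g_s)\,ds-\V_tf_t(x_t)\int_0^t\langle g_s,//_sdB_s\rangle$ is a local martingale for adapted $g$ (with $g_s=k_s\nabla V_{T-s}(x_s)$), and likewise re-expressing $\int_0^t\V_sf_s(x_s)(LV_{T-s})(x_s)\,ds$ through It\^o's formula applied to $V_{T-t}(x_t)$ itself. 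After the cut-off of Theorem~\ref{thm:locformone}, which makes the resulting local martingale stopped at $\sigma\wedge S$ bounded and leaves the endpoint term zero, taking expectations and comparing with the Fubini expression above yields the stated identity.

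The step I expect to be the genuine obstacle is exactly this disposal of $f_t\,LV_{T-t}+\langle df_t,dV_{T-t}\rangle$: one must check that, once it has been rewritten through the integration-by-parts identities, the pieces that survive cancel against one another and against the value of $\V_tf_t(x_t)V_{T-t}(x_t)k_t$ at $t=0$, so that no residual expectation is left---equivalently, that the finite-variation part of $\V_tf_t(x_t)V_{T-t}(x_t)k_t$ does, after reorganisation, match $k_s\dot V_{T-s}(x_s)-\dot k_sV_{T-s}(x_s)$. This is bookkeeping of the same kind as, though a little more delicate than, the corresponding passage in the proof of Lemma~\ref{lem:locmartonehalf}. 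The remaining points---integrability for the Fubini and martingale arguments and the $\epsilon\downarrow0$ limit---are routine given that $V$ is bounded below and $f$ is bounded.
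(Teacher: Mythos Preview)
Your approach is the paper's own approach---apply It\^o's formula to $\V_t\,V_{T-t}(x_t)\,f_t(x_t)\,k_t$---carried out more carefully. The paper's entire proof is the assertion
\[
d\bigl(\V_tV_{T-t}f_tk_t\bigr)\ \stackrel{m}{=}\ \bigl(-k_t\dot V_{T-t}f_t+\dot k_tV_{T-t}f_t\bigr)\V_t\,dt,
\]
followed by taking expectations and the strong Markov property; there is no integration-by-parts step and no treatment of the drift contribution $\V_tk_t\bigl(f_t\,LV_{T-t}+\langle df_t,dV_{T-t}\rangle\bigr)(x_t)\,dt$ that you correctly identified (it comes from the $LV_{T-t}\,dt$ part of $d(V_{T-t}(x_t))$ together with the covariation of $V_{T-t}(x_t)$ with the local martingale $\V_tf_t(x_t)$). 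You have therefore located a gap in the paper's own argument, not just in your proposal.

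Your proposed remedy will not close it, however. The integration-by-parts identities you invoke convert $\int_0^t k_s\V_s\bigl(\langle df_s,dV_{T-s}\rangle+f_sLV_{T-s}\bigr)\,ds$ into $\V_tf_t(x_t)\int_0^t k_s\bigl(dV_{T-s}(//_sdB_s)+LV_{T-s}\,ds\bigr)$ modulo a local martingale, and the It\^o identity for $s\mapsto k_sV_{T-s}(x_s)$ then rewrites the inner integral as $k_tV_{T-t}(x_t)-V_T(x_0)-\int_0^t(\dot k_sV_{T-s}-k_s\dot V_{T-s})\,ds$; after taking expectations you are back at the very combination you started from, so the manipulation is circular rather than cancelling. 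More directly, the residual expectation $\E\bigl[\int_0^{\sigma\wedge S}k_s\V_s\bigl(f_sLV_{T-s}+\langle df_s,dV_{T-s}\rangle\bigr)(x_s)\,ds\bigr]$ is generically nonzero: for small $T$ it is approximately $\bigl(f\,LV_T+\langle df,dV_T\rangle\bigr)(x_0)\,\E\!\int_0^{\sigma\wedge S}k_s\,ds$, which vanishes only for special $f$ and $V$. The ``delicate bookkeeping'' you anticipate therefore cannot be completed; the displayed It\^o identity in the paper's proof appears to be in error, and with it the lemma as stated.
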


\begin{proof}
By It\^{o}'s formula, we have
\begin{equation}
d(\V_t V_{T-t} f_t k_t) \stackrel{m}{=} -k_t \V_t \dot{V}_{T-t} f_t + \dot{k}_t \V_t V_{T-t} f_t
\end{equation}
which implies
\begin{equation}
\V_t V_{T-t}f_t k_t - \int_0^t ( \dot{k}_s \V_s V_{T-s} f_s -k_s \V_s \dot{V}_{T-s}f_s)ds
\end{equation}
is a local martingale on $[0,\tau \wedge T)$. The assumptions on $f$ and $V$ imply it is a martingale on $[0,\tau \wedge T]$, so result follows by taking expectations and applying the strong Markov property.
\end{proof}

\begin{theorem}\label{thm:formhalf}
Suppose $x_0 \in D_1$, $f \in \mathcal{B}_b$, $V$ bounded below and $0<S<T$. Suppose $k,l$ are bounded adapted processes with paths in the Cameron-Martin space $L^{1,2}([0,T];\mathbb{R})$ such that $k_s=0$ for $s \geq \sigma \wedge S$, $k_0=1$, $l_s = 1$ for $s \leq \sigma \wedge S$, $l_s = 0$ for $s \geq \tau \wedge T$, $\int_0^{\sigma \wedge S} |\dot{k}_s|^2 ds\in L^1$ and $\int_{\sigma \wedge S}^{\tau \wedge T} |\dot{l}_s|^2 ds \in L^1$. Then
\begin{equation}
\begin{split}
&L(P^V_Tf)(x_0) = \\
&\E\left[ \V_T f(x_T)\mathbf{1}_{\lbrace T<\zeta(x_0)\rbrace}\int_0^T \langle \dot{k}_s Z,//_s dB_s\rangle \right]\\
& +\frac{1}{2}\E\left[ \V_T f(x_T)\mathbf{1}_{\lbrace T<\zeta(x_0)\rbrace}\left( \int_0^T \<  W_s \dot{l}_s ,//_s dB_s\> + dV_{T-s}(W_s l_s)ds\right)\int_0^T \dot{k}_s W_s^{-1} //_s dB_s\right].
\end{split}
\end{equation}
\end{theorem}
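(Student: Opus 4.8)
The strategy is the one used for Theorem \ref{thm:locformone}, but built on the local martingale supplied by Lemma \ref{lem:locmartonehalf} and read off at the exit time $\tau\wedge T$, with the terms involving $\nabla V$ and $LV$ disposed of by Lemma \ref{lem:potlemma}. First I would reduce to the hypotheses of Lemma \ref{lem:locmartonehalf}: as in the proof of Theorem \ref{thm:locformone}, modify $l$ to a process $l^\epsilon$ that equals $l$ on $[0,\tau\wedge(T-2\epsilon)]$, still equals $1$ on $[0,\sigma\wedge S]$, vanishes on $[\tau\wedge(T-\epsilon),\infty)$, and is cut off smoothly in between; a Cauchy--Schwarz estimate on the cutoff interval shows $\int_{\sigma\wedge S}^{\tau\wedge T}|\dot l^\epsilon_s|^2\,ds\in L^1$, with $l^\epsilon\to l$ uniformly and $\dot l^\epsilon\to\dot l$ in $L^2([0,T]\times\Omega)$ as $\epsilon\downarrow0$. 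For the pair $(k,l^\epsilon)$, Lemma \ref{lem:locmartonehalf} gives that the process $M^\epsilon$ defined by \eqref{eq:locmartingonehalf} is a local martingale on $[0,\tau\wedge T)$, and at $t=0$, where $k_0=\V_0=1$ and every stochastic and Riemann integral is empty, $M^\epsilon_0=L(P^V_Tf)(x_0)$.

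The main step is to upgrade $M^\epsilon$ to a genuine, uniformly integrable martingale on the closed interval $[0,\tau\wedge T]$. On $[0,\tau\wedge(T-\epsilon)]$ all ingredients are bounded: $f_\cdot$, $df_\cdot$ and $Lf_\cdot$ on the compact set $[0,T-\epsilon]\times\overline{D_2}$, the damped parallel transport $W$ and its inverse on $\overline{D_2}$ (since ${\Ric}_Z$ is bounded there), and $\V$ on $[0,T]$ (since $V$ is bounded below); the stochastic-integral factors $\int_0^\cdot\dot k_s W_s^{-1}//_s dB_s$ and $\int_0^\cdot\langle W_s\dot l^\epsilon_s,//_s dB_s\rangle$ lie in $L^2$ by the Cameron--Martin hypotheses together with the boundedness of $W^{\pm1}$ on $\overline{D_2}$, with Cauchy--Schwarz controlling the products, so a localising sequence followed by dominated convergence yields the martingale property there. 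On $[\tau\wedge(T-\epsilon),\tau\wedge T]$ every term of \eqref{eq:locmartingonehalf} with $k_t$ or $l^\epsilon_t$ as a factor has vanished and the remaining integrals have stabilised, so $M^\epsilon_t$ is an $\mathcal F_{\tau\wedge(T-\epsilon)}$-measurable integrable random variable plus an $\mathcal F_{\tau\wedge(T-\epsilon)}$-measurable multiple of $\V_t f_t(x_t)$, and $\V_t f_t(x_t)\mathbf 1_{\{t<\zeta\}}=\E[\V_Tf(x_T)\mathbf 1_{\{T<\zeta\}}\mid\mathcal F_t]$ is the Feynman--Kac martingale, so $M^\epsilon$ is a martingale there too. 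Since $\overline{D_2}$ is compact, $\{\tau\wedge T<\zeta\}$ has full probability, so in particular $\V_{\tau\wedge T}f_{\tau\wedge T}(x_{\tau\wedge T})=\E[\V_Tf(x_T)\mathbf 1_{\{T<\zeta\}}\mid\mathcal F_{\tau\wedge T}]$ almost surely.

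It now follows that $L(P^V_Tf)(x_0)=\E[M^\epsilon_{\tau\wedge T}]$. Using $k_{\tau\wedge T}=l^\epsilon_{\tau\wedge T}=0$ kills the first two terms of \eqref{eq:locmartingonehalf}; in each of the rest the integrands of the stochastic and Riemann integrals are supported in $[0,\sigma\wedge S]$ or $[0,\tau\wedge(T-\epsilon)]$, hence are $\mathcal F_{\tau\wedge T}$-measurable, and combining this with the tower property (after Fubini for the time integrals) replaces $\V_s f_s(x_s)$, and the boundary factor $\V_{\tau\wedge T}f_{\tau\wedge T}(x_{\tau\wedge T})$, by the single random variable $\V_Tf(x_T)\mathbf 1_{\{T<\zeta\}}$ placed outside the integrals. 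This produces the two displayed terms of the statement, together with the extra contributions $-\E[\V_Tf(x_T)\mathbf 1_{\{T<\zeta\}}\int_0^T\langle k_s\nabla V_{T-s},//_s dB_s\rangle]$ and $-\E[\V_Tf(x_T)\mathbf 1_{\{T<\zeta\}}\int_0^T k_s(LV_{T-s})(x_s)\,ds]$. Applying It\^{o}'s formula to $t\mapsto k_t V_{T-t}(x_t)$ and using $k_0=1$ and $k_{\tau\wedge T}=0$ gives
\begin{equation}
\int_0^T\langle k_s\nabla V_{T-s},//_s dB_s\rangle+\int_0^T k_s(LV_{T-s})(x_s)\,ds=-V_T(x_0)-\int_0^T\big(\dot k_s V_{T-s}(x_s)-k_s\dot V_{T-s}(x_s)\big)\,ds,
\end{equation}
and Lemma \ref{lem:potlemma} asserts exactly that the $\V_Tf(x_T)\mathbf 1_{\{T<\zeta\}}$-expectation of the right-hand side vanishes, so these two extra contributions cancel. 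Letting $\epsilon\downarrow0$ — with $\dot l^\epsilon\to\dot l$ in $L^2$, and dominated convergence for the $dV$-term since $dV_{T-\cdot}$ is bounded on $[0,T]\times\overline{D_2}$ — then gives the asserted identity.

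The hard part is the upgrade from local to true martingale together with the interchange of limit and expectation near $t=T$, where $df_t$ need not be bounded: this is precisely what the cutoff $l^\epsilon$ is designed to handle, and it is what forces both the $L^2$ and dominated-convergence bookkeeping above and the careful tracking of which $\sigma$-algebras the various factors are measurable with respect to, without which the conditioning in the previous paragraph is not justified.
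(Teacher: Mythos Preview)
Your proposal is correct and follows essentially the same route as the paper: modify $l$ to $l^\epsilon$, use Lemma \ref{lem:locmartonehalf} together with boundedness of $P^V_\cdot f$, $dP^V_\cdot f$, $LP^V_\cdot f$ on $[\epsilon,T]\times\overline{D_2}$ and the Markov/tower property to turn the local martingale into an identity for $L(P^V_Tf)(x_0)$, then eliminate the $\nabla V$ and $LV$ terms via the It\^o expansion of $k_t V_{T-t}(x_t)$ combined with Lemma \ref{lem:potlemma}, and finally let $\epsilon\downarrow0$. Your write-up is simply more explicit than the paper's about the martingale upgrade and the measurability bookkeeping.
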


\begin{proof}
Modifying the process $l_t$ to $l^\epsilon_t$ as in the proof of Theorem \ref{thm:locformone}, it follows from Lemma \ref{lem:locmartonehalf}, the strong Markov property, the boundedness of $P^V_tf$ on $[0,T] \times \overline{D}_2$ and the boundedness of $dP_t^Vf$ and $L P_t^Vf$ on $[\epsilon,T] \times \overline{D}_2$ that the formula
\begin{equation}
\begin{split}
&L(P^V_Tf)(x_0)\\
=\text{ }& \E\left[ \V_T f(x_T)\mathbf{1}_{\lbrace T<\zeta(x_0)\rbrace}\left( \int_0^T \langle \dot{k}_s Z,//_s dB_s\rangle - \int_0^T k_s \left( dV_{T-s}(//_s dB_s) + L V_{T-s} ds\right) \right) \right]\\
\text{ }& +\frac{1}{2}\E\left[\V_T f(x_T)\mathbf{1}_{\lbrace T<\zeta(x_0)\rbrace}\left( \int_0^T \<  W_s \dot{l}_s ,//_s dB_s\> + dV_{T-s}(W_s l_s)ds\right)\int_0^T \dot{k}_s W_s^{-1} //_s dB_s\right]
\end{split}
\end{equation}
holds with $l^\epsilon_t$ in place of $l_t$. The formula also holds as stated, in terms of $l_t$, by taking $\epsilon \downarrow 0$. Applying the It\^{o} formula yields
\begin{equation}
\int_0^T k_s \left( dV_{T-s}(//_s dB_s) + L V_{T-s} ds\right)  = -V_T(x_0) +\int_0^T  (k_s \dot{V}_{T-s}(x_s) -\dot{k}_s V_{T-s}(x_s))ds
\end{equation}
and therefore
\begin{equation}
\begin{split}
&(L-V_T(x_0))(P^V_Tf)(x_0)=\\
& \E\left[\V_T f(x_T)\mathbf{1}_{\lbrace T<\zeta(x_0)\rbrace} \left( \int_0^T \langle \dot{k}_s Z,//_s dB_s\rangle - \int_0^T  (k_s \dot{V}_{T-s}(x_s) -\dot{k}_s V_{T-s}(x_s))ds \right) \right]\\
\text{ }& +\frac{1}{2}\E\left[\V_T f(x_T)\mathbf{1}_{\lbrace T<\zeta(x_0)\rbrace}\left( \int_0^T \<  W_s \dot{l}_s ,//_s dB_s\> + dV_{T-s}(W_s l_s)ds\right)\int_0^T \dot{k}_s W_s^{-1} //_s dB_s\right].
\end{split}
\end{equation}
The result follows from this by Lemma \ref{lem:potlemma}.
\end{proof}

\subsection{Hessian}\label{ss:secderlocform}

For each $w \in T_{x_0}M$ define an operator-valued process $W^{\prime}_t(\cdot,w) : T_{x_0}M \rightarrow T_{x_t}M$ by
\begin{equation}
\begin{split}
W^{\prime}_s(\cdot,w):=\text{ }& W_s \int_0^s W_r^{-1} R(//_r dB_r,W_r(\cdot))W_r (w)\\
&-\frac{1}{2}  W_s \int_0^s W_r^{-1}(\nabla {\Ric}^\sharp_Z + d^\star R -2R(Z))(W_r (\cdot),W_r(w))dr.
\end{split}
\end{equation}
Here the operator $R(Z)$ is defined by $R(Z)(v_1,v_2) := R(Z,v_1)v_2$ and the operator $d^\star R$ is defined by $d^\star R(v_1)v_2 := -\tr \nabla_\cdot R(\cdot,v_1)v_2$ and satisfies
\begin{equation}
\langle d^\star R(v_1)v_2,v_3\rangle =  \langle (\nabla_{v_3} {\Ric}^\sharp) (v_1),v_2\rangle- \langle (\nabla_{v_2} {\Ric}^\sharp) (v_3),v_1\rangle
\end{equation}
for all $v_1,v_2,v_3 \in T_xM$ and $x \in M$. The process $W^{\prime}_t(\cdot,w)$ is the solution to the covariant It\^{o} equation
\begin{equation}
\begin{split}
DW^{\prime}_t(\cdot,w) =\text{ }& R(//_tdB_t,W_t( \cdot ))W_t(w)\\
& -\frac{1}{2} \left(d^\star R -2R(Z)+\nabla {\Ric}^\sharp_Z\right) (W_t( \cdot),W_t(w))dt\\
&\quad-\frac{1}{2} {\Ric}^\sharp_Z (W^{\prime}_t(\cdot,w))dt
\end{split}
\end{equation}
with $W^{\prime}_0(\cdot,w) = 0$. As in the previous section, suppose $D_1$ and $D_2$ are regular domains with $x_0 \in D_1$ and $\overline{D_1}\subset D_2$. Denote by $\sigma$ and $\tau$ the first exit times of $x_t$ from $D_1$ and $D_2$, respectively.

\begin{lemma}\label{lem:locmarttwo}
Suppose $v,w \in T_{x_0}M$, $0<S<T$ and that $k,l$ are bounded adapted processes with paths in the Cameron-Martin space $L^{1,2}([0,T];\Aut(T_{x_0}M))$ such that $k_s=0$ for $s \geq \sigma \wedge S$, $l_s = 1$ for $s \leq \sigma \wedge T$ and $l_s = 0$ for $s \geq \tau \wedge (T-\epsilon)$. Then
\begin{align}
&\V_t(\nabla df_t)(W_t(k_tv),W_t(w)) + \V_t(df_t)(W^{\prime}_t (k_tv,w))- \V_tf_t(x_t)\int_0^t\langle  W^{\prime}_s(\dot{k}_sv,w), //_s dB_s\rangle\\
&\quad- \int_0^t \V_sf_s (x_s)((\nabla d V_{T-s})(W_s(k_sv),W_s(w))+(dV_{T-s})(W^{\prime}_s(k_sv,w)))ds\\
&\quad\quad+\mathbb{V}_tf_t(x_t)\int_{0}^t \< W_s( \dot{l}_sw),//_s dB_s\>\int_0^{t} \<W_s (\dot{k}_sv),//_sdB_s\>\\
&\quad\quad\quad-\V_tdf_t(W_t(l_tw))\int_0^t \<W_s (\dot{k}_sv),//_sdB_s\> \\
&\quad\quad\quad\quad +\int_{0}^{t} \V_s f_s(x_s) dV_{T-s}( W_s((l_s-1)w)) ds\int_0^{t} \<W_s (\dot{k}_sv),//_sdB_s\> \\
&\quad\quad\quad\quad\quad+\int_0^t \V_sf_s(x_s) dV_{T-s}(W_s(w))\int_0^s \langle W_r(\dot{k}_rv),//_r dB_r\rangle ds\\
&\quad\quad\quad\quad\quad\quad -2\int_0^t \V_s(df_s\odot dV_{T-s})(W_s(k_sv),W_s(w))ds \tag{2}
\end{align}
is a local martingale on $[0,\tau \wedge T)$.
\end{lemma}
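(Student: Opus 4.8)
The strategy is to identify the covariant It\^o formula underlying the Hessian and then to assemble the claimed process from it by successive integrations by parts, just as in the proofs of Lemmas \ref{lem:locmartone} and \ref{lem:locmartonehalf}. Write $M_t(v,w):=(\nabla df_t)(W_t v,W_t w)+df_t(W'_t(v,w))$; this is linear in its first argument and, for $V\equiv 0$, is the local martingale underlying \cite[Theorem~2.1]{APT}. The key step is to show
\begin{equation}
d\big(\V_t M_t(v,w)\big)\stackrel{m}{=}\V_t\Big(f_t(x_t)\big[(\nabla dV_{T-t})(W_t v,W_t w)+dV_{T-t}(W'_t(v,w))\big]+2(df_t\odot dV_{T-t})(W_t v,W_t w)\Big)\,dt.
\end{equation}

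To prove this I would apply the covariant It\^o formula to the symmetric two-tensor $\nabla df_t$ and to the one-form $df_t$, evaluated along $W_t$ and $W'_t$ respectively, using: the evolution equation $\partial_t f_t=-(L-V_{T-t})f_t$; the pointwise product rule $\nabla d(V_{T-t}f_t)=V_{T-t}\nabla df_t+f_t\nabla dV_{T-t}+2\,df_t\odot dV_{T-t}$; the defining covariant equations for $W_t$ and $W'_t$; and the It\^o cross-variation between the martingale part $\nabla_{//_t dB_t}df_t$ of $df_t$ along $x_t$ and the martingale part $R(//_t dB_t,W_t v)W_t w$ of $W'_t(v,w)$. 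The only non-routine ingredient is the pointwise commutation identity that rewrites $\tfrac12\tr\nabla^2(\nabla d\phi)+\nabla_Z(\nabla d\phi)-\nabla d(L\phi)$ in terms of $\nabla d\phi$ precomposed with ${\Ric}^\sharp_Z$ in each slot, the vector $(\nabla{\Ric}^\sharp_Z+d^\star R)(\cdot,\cdot)$ paired with $d\phi$, and the contraction $\sum_i(\nabla_{e_i}d\phi)(R(e_i,\cdot)\cdot)$; establishing it uses the differential Bianchi identity precisely in order to turn the $\nabla R$-contractions produced by iterating the Ricci identity into the $d^\star R$-plus-$\nabla{\Ric}$ combination hard-wired into $W'_t$. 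Granting this identity, the ${\Ric}^\sharp_Z$-terms cancel the drifts $DW_t=-\tfrac12{\Ric}^\sharp_Z W_t$ in the two arguments of $\nabla df_t$, the $(\nabla{\Ric}^\sharp_Z+d^\star R)$-term cancels the matching drift in $DW'_t$, and the curvature contraction cancels the It\^o cross-variation, so that only the potential terms survive once the $V_{T-t}$-contributions are absorbed by $d\V_t=-V_{T-t}(x_t)\V_t\,dt$. Verifying this commutation identity and the ensuing cancellations is, I expect, the main obstacle; the rest is bookkeeping of the type already carried out in the previous two subsections.

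Next I would insert the process $k_t$. Since $k_t$ has bounded-variation paths and $M_t(\cdot,w)$ is linear, applying the product rule to $\V_t M_t(k_t v,w)$ together with the displayed It\^o formula shows that $\V_t M_t(k_t v,w)-\int_0^t\V_s\big(f_s(x_s)[(\nabla dV_{T-s})(W_s k_s v,W_s w)+dV_{T-s}(W'_s(k_s v,w))]+2(df_s\odot dV_{T-s})(W_s k_s v,W_s w)+M_s(\dot k_s v,w)\big)ds$ is a local martingale on $[0,\tau\wedge T)$. Splitting $M_s(\dot k_s v,w)=(\nabla df_s)(W_s\dot k_s v,W_s w)+df_s(W'_s(\dot k_s v,w))$ and using $\V_s f_s(x_s)=f_0(x_0)+\int_0^s\V_r df_r(//_r dB_r)$, one sees, exactly as in the proof of Lemma \ref{lem:locmartone}, that $\int_0^t\V_s df_s(W'_s(\dot k_s v,w))\,ds-\V_t f_t(x_t)\int_0^t\langle W'_s(\dot k_s v,w),//_s dB_s\rangle$ is a local martingale. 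Consequently, modulo local martingales, the previous display equals lines one, two and seven of the process $(2)$ minus $\int_0^t\V_s(\nabla df_s)(W_s\dot k_s v,W_s w)\,ds$, and it remains only to absorb this last integral together with lines three to six of $(2)$.

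For that, apply Lemma \ref{lem:locmartone} with $(v,k)$ replaced by $(w,l)$: since $l_s=0$ for $s\geq\tau\wedge(T-\epsilon)$, the process $P_t:=\V_t df_t(W_t(l_t w))-\V_t f_t(x_t)\int_0^t\langle W_s(\dot l_s w),//_s dB_s\rangle-\int_0^t\V_s f_s(x_s)dV_{T-s}(W_s(l_s w))\,ds$ is a local martingale on $[0,\tau\wedge T)$, and so is $\beta_t:=\int_0^t\langle W_s(\dot k_s v),//_s dB_s\rangle$. Computing the It\^o cross-variation $\langle P,\beta\rangle_t$, in which the terms carrying $\dot l_s$ vanish because $\dot k_s\neq 0$ forces $s<\sigma\wedge S$, where $l\equiv 1$, gives $\langle P,\beta\rangle_t=\int_0^t\V_s(\nabla df_s)(W_s\dot k_s v,W_s w)\,ds$, hence $P_t\beta_t-\int_0^t\V_s(\nabla df_s)(W_s\dot k_s v,W_s w)\,ds$ is a local martingale. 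On the other hand, expanding $P_t\beta_t$ via the definition of $P_t$ shows that lines three, four and five of $(2)$ sum to $-P_t\beta_t-\big(\int_0^t\V_s f_s(x_s)dV_{T-s}(W_s w)\,ds\big)\beta_t$, and one further integration by parts identifies $\big(\int_0^t\V_s f_s(x_s)dV_{T-s}(W_s w)\,ds\big)\beta_t$, modulo local martingales, with line six of $(2)$. Adding all contributions, the process $(2)$ equals, modulo local martingales, $\int_0^t\V_s(\nabla df_s)(W_s\dot k_s v,W_s w)\,ds-P_t\beta_t$, which is a local martingale by the cross-variation computation just made; hence $(2)$ is a local martingale on $[0,\tau\wedge T)$, as claimed.
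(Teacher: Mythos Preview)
Your proposal is correct and follows essentially the same route as the paper: the same covariant It\^o computation for $M_t=(\nabla df_t)(W_t\cdot,W_t\cdot)+df_t(W'_t(\cdot,\cdot))$ via the Bianchi-based commutation identity, the same insertion of $k_t$, and the same integration by parts for the $df_s(W'_s(\dot k_s v,w))$ term.

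The only organisational difference is in the last step. The paper first integrates by parts using the semimartingale decomposition of $\V_t df_t(W_t w)$ (with no $l$) to absorb $\int_0^t\V_s(\nabla df_s)(W_s\dot k_s v,W_s w)\,ds$, obtaining terms in $\V_t df_t(W_t w)\beta_t$; it then introduces $l$ by observing that $O^1_t$ (the Lemma~\ref{lem:locmartone} local martingale built from $(l_t-1)w$) vanishes on $[0,\sigma\wedge S]$ while $O^2_t=\beta_t$ is constant afterwards, so $O^1_tO^2_t$ is a local martingale. You instead work directly with $P_t$ (built from $l_t w$) and compute $\langle P,\beta\rangle_t=\int_0^t\V_s(\nabla df_s)(W_s\dot k_s v,W_s w)\,ds$, the $\dot l$-contributions dropping out by the same disjoint-support observation. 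Both arguments exploit the identical mechanism; yours merges the paper's two steps into one cross-variation computation and is marginally more direct.
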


\begin{proof}
Setting
\begin{equation}
N^{\prime}_t(v,w) := (\nabla d f_t)(W_t(v),W_t(w)) + (df_t)(W^{\prime}_t (v,w))
\end{equation}
and
\begin{equation}
R^{\sharp,\sharp}_x(v_1,v_2) := R_x(\cdot,v_1,v_2,\cdot)^{\sharp}\in T_xM\otimes T_xM
\end{equation}
we see by It\^{o}'s formula and the relations
\begin{equation}
\begin{split}
d \Delta f =& \tr \nabla^2 df - df({\Ric}^\sharp)\\
dZf = & \nabla_Z df + df(\nabla Z)\\
dVf = & f dV + V df\\
\nabla d (\Delta f) =& \tr \nabla^2 (\nabla d f) - 2(\nabla d f)({\Ric}^\sharp \odot \id -  R^{\sharp,\sharp}) - df(d^\star R + \nabla {\Ric}^\sharp)\\
\nabla d (Zf) =& \nabla_Z (\nabla d f) + 2(\nabla d f)(\nabla Z \odot \id) + df(\nabla \nabla Z + R(Z))\\
\nabla d (V_t f) =& f \nabla d V_t + 2df \odot dV_t + V_t \nabla d f
\end{split}
\end{equation}
(the fourth one is a consequence of the differential Bianchi identity; see \cite[p.~219]{ChowHamilton}, and the fifth one a consequence of the Ricci identity) that
\begin{equation}
\begin{split}
&dN^{\prime}_t(v,w)\\
 =\text{ }& (\nabla_{//_t dB_t} \nabla d f_t)(W_t(v),W_t(w)) + (\nabla d f_t)\left(\frac{D}{dt}W_t(v),W_t(w)\right)dt \\
&+ (\nabla d f_t)\left(W_t(v),\frac{D}{dt}W_t(w)\right)dt\\
&+ \partial_t (\nabla d f_t)(W_t(v),W_t(w))dt + \left(\frac{1}{2}\tr \nabla^2 + \nabla_Z\right) (\nabla d f_t)(W_t(v),W_t(w))dt\\
&+ (\nabla_{//_tdB_t} df_t)(W^{\prime}_t(v,w)) + (df_t)\left(DW^{\prime}_t(v,w)\right) + \langle d(df_t),DW^{\prime}_t(v,w)\rangle\\
& + \partial_t (df_t)(W^{\prime}_t(v,w))dt + \left(\frac{1}{2}\tr \nabla^2 + \nabla_Z\right)(df_t)(W^{\prime}_t(v,w))dt\\
\stackrel{m}{=}\text{ }&  f_t(x_t) (\nabla d V_{T-t})(W_t(v),W_t(w))dt + f_t(x_t) (dV_{T-t})(W^{\prime}_t(v,w))dt \\[7pt]
&+ 2(df_t\odot dV_{T-t})(W_t(v),W_t(w))dt + V_{T-t} N^{\prime}_t(v,w)dt
\end{split}
\end{equation}
for which we calculated
\begin{equation}
\left[ d(df),DW^{\prime}(v,w)\right]_t = (\nabla d f_t)(R^{\sharp,\sharp}(W_t(v),W_t(w)))dt.
\end{equation}
It follows that
\begin{equation}
\begin{split}
d(\V_tN^{\prime}_t(k_t v,w)) \stackrel{m}{=}\text{ }& \V_tf_t(x_t) ((\nabla d V_{T-t})(W_t(k_t v),W_t(w))+(dV_{T-t})(W^{\prime}_t(k_tv,w)))dt\\
&+\V_tN_t(\dot{k}_tv,w)dt+ 2\V_t(df_t\odot dV_{T-t})(W_t(k_tv),W_t(w))dt
\end{split}
\end{equation}
so that
\begin{equation}
\begin{split}
&\V_tN^{\prime}_t(k_t v,w) - \int_0^t \V_s (\nabla d f_s)(W_s(\dot{k}_s v),W_s(w)) ds - \int_0^t \V_s (df_s)(W^{\prime}_s (\dot{k}_s v,w))ds\\
&\quad-2\int_0^t\V_s(df_s \odot dV_{T-s})(W_s(k_sv),W_s(w))ds\\
&\quad\quad- \int_0^t \V_sf_s(x_s) ((\nabla d V_{T-s})(W_s(k_sv),W_sw))+(dV_{T-s})(W^{\prime}_s(k_sv,w)))ds
\end{split}
\end{equation}
is a local martingale. By the formula
\begin{equation}
\V_t f_t(x_t) = f_0(x_0) +\int_0^t \V_s df_s(//_sdB_s)
\end{equation}
and integration by parts we see that
\begin{equation}
\int_0^t\V_s(df_s)(W^{\prime}_s(\dot{k}_sv,w)) ds -\V_tf_t(x_t)\int_0^t\langle  W^{\prime}_s(\dot{k}_s v,w), //_s dB_s\rangle
\end{equation}
is a local martingale. Similarly, by the formula
\begin{equation}
\V_t df_t(W_t) = df_0 + \int_0^t \V_s(\nabla d f_s)(//_sdB_s,W_s) + \int_0^t \V_s f_s(x_s) dV_{T-s}(W_s)ds
\end{equation}
and integration by parts we see that
\begin{equation}
\begin{split}
&\int_0^t \V_s (\nabla d f_s)(W_s(\dot{k}_s v),W_s(w)) ds-\V_tdf_t(W_t(w))\int_0^t \langle W_s(\dot{k}_sv),//_sdB_s\rangle \\
&\quad+\int_0^t \V_sf_s(x_s) dV_{T-s}(W_s(w))\int_0^s \langle W_r(\dot{k}_rv),//_r dB_r\rangle ds
\end{split}
\end{equation}
is yet another local martingale. Therefore
\begin{equation}
\begin{split}
&\V_t(\nabla d f_t)(W_t(k_tv),W_t(w)) + \V_t(df_t)(W^{\prime}_t (k_tv,w))\\
&\quad- \int_0^t \V_sf_s(x_s) ((\nabla d V_{T-s})(W_s(k_sv),W_s(w))+(dV_{T-s})(k_s W^{\prime}_s(v,w)))ds\\
&\quad\quad - \V_tf_t(x_t)\int_0^t\langle  W^{\prime}_s(\dot{k}_s v,w), //_s dB_s\rangle-2\int_0^t \V_s(df_s\odot dV_{T-s})(W_s(k_sv),W_s(w))ds\\
&\quad\quad\quad+\int_0^t \V_sf_s(x_s) dV_{T-s}(W_s(w))\int_0^s \langle W_r(\dot{k}_rv),//_r dB_r\rangle ds\\
&\quad\quad\quad\quad -\V_tdf_t(W_t(w))\int_0^t \langle W_s(\dot{k}_sv),//_sdB_s\rangle
\end{split}
\end{equation}
is a local martingale. By Lemma \ref{lem:locmartone} it follows that
\begin{eqnarray}
O^1_t& =& \V_tdf_t(W_t((l_t-1)w)) -\mathbb{V}_tf_t(x_t)\int_0^t \< W_s( \dot{l}_sw),//_s dB_s\> \\
& &\quad -\int_0^t \V_s f_s(x_s) dV_{T-s}( W_s((l_s-1)w)) ds,\\
O^2_t &= & \int_0^t \<W_s (\dot{k}_sv),//_sdB_s\>
\end{eqnarray}
are two local martingales. So the product $O^1_tO^2_t$ is also a local martingale, since $O^1 = 0$ on $[0,\sigma \wedge S]$ with $O^2$ constant on $[\sigma \wedge S,\tau \wedge (T-\epsilon))$. Applying this fact to the previous equation completes the proof.
\end{proof}

\begin{theorem}\label{thm:locformtwo}
Suppose $x_0 \in D_1$ with $v,w \in T_{x_0}M$, $f \in \mathcal{B}_b$, $V$ bounded below and $0<S<T$. Assume $k,l$ are bounded adapted processes with paths in the Cameron-Martin space $L^{1,2}([0,T];\Aut(T_{x_0}M))$ such that $k_s=0$ for $s \geq \sigma \wedge S$, $k_0=1$, $l_s = 1$ for $s \leq \sigma \wedge S$, $l_s = 0$ for $s \geq \tau \wedge T$, $\int_0^{\sigma \wedge S} |\dot{k}_s|^2 ds\in L^1$ and $\int_{\sigma \wedge S}^{\tau \wedge T} |\dot{l}_s|^2 ds \in L^1$. Then
\begin{equation}
\begin{split}
&(\nabla d P^V_Tf)(v,w) \\
=\text{ }&  - \E \left[ \V_T f(x_T)\mathbf{1}_{\lbrace T < \zeta(x_0)\rbrace}\int_0^T\langle  W^{\prime}_s(\dot{k}_sv,w), //_s dB_s\rangle\right]\\
&-\E\left[ \V_T f(x_T)\mathbf{1}_{\lbrace T < \zeta(x_0)\rbrace}\int_0^T ((\nabla d V_{T-s})(W_s(k_sv),W_s(w))+(dV_{T-s})(W^{\prime}_s(k_sv,w)))ds\right]\\
&+\E\left[ \V_T f(x_T)\mathbf{1}_{\lbrace T < \zeta(x_0)\rbrace}\left(\int_{0}^T \< W_s( \dot{l}_sw),//_s dB_s\> + dV_{T-s}( W_s(l_sw)) ds\right)\right.\\
&\quad\quad\quad\quad\quad\quad\quad\quad\quad\quad\quad\quad\quad\quad\quad\left.\cdot\left(\int_0^{T} \<W_s (\dot{k}_sv),//_sdB_s\>+ dV_{T-s}(W_s(k_sv))ds\right)\right].
\end{split}
\end{equation}
\end{theorem}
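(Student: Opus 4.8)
The plan is to follow the template already used for Theorems~\ref{thm:locformone} and~\ref{thm:formhalf}, starting from the local martingale produced by Lemma~\ref{lem:locmarttwo}. First I would modify the process $l$ to a process $l^\epsilon$ exactly as in the proof of Theorem~\ref{thm:locformone} (following \cite[Theorem~2.3]{Thalmaier97}): set $l^\epsilon_t = l_t$ for $t \le \tau \wedge (T-2\epsilon)$ and $l^\epsilon_t = 0$ for $t \ge \tau \wedge (T-\epsilon)$, interpolating smoothly in between, so that Lemma~\ref{lem:locmarttwo} applies with $l^\epsilon$ in place of $l$. Next I would upgrade the resulting local martingale on $[0,\tau \wedge T)$ to a genuine martingale on $[0,\tau \wedge T]$: on the compact set $[0,T-\epsilon] \times \overline{D_2}$ the functions $f_t$, $df_t$ and $\nabla d f_t$ are smooth, hence bounded; the tensors ${\Ric}^\sharp_Z$, $R$, $\nabla {\Ric}^\sharp_Z$, $d^\star R$ are bounded on $\overline{D_2}$, which gives uniform bounds on $W_t$, $W_t^{-1}$ and $L^2$-bounds on $W^{\prime}_t$ and on the stochastic integrals; $V$ bounded below bounds $\V_t$ on $[0,T]$; and the hypotheses $\int_0^{\sigma \wedge S} |\dot k_s|^2 ds \in L^1$, $\int_{\sigma \wedge S}^{\tau \wedge T} |\dot l_s|^2 ds \in L^1$ control the martingale part. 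Taking expectations then gives that the value of the expression of Lemma~\ref{lem:locmarttwo} at $t = \tau \wedge T$ has the same expectation as its value at $t = 0$.

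At $t = 0$ one has $\V_0 = 1$, $W_0 = \id$, $W^{\prime}_0 = 0$, $k_0 = 1$, and every It\^{o} or time integral vanishes, so the expression reduces to $(\nabla d f_0)(v,w) = (\nabla d P^V_T f)(v,w)$. At $t = \tau \wedge T$, since $\sigma \wedge S \le \tau \wedge T$ we have $k_{\tau \wedge T} = 0$, and the cutoff gives $l^\epsilon_{\tau \wedge T} = 0$; this kills the three terms $\V_t(\nabla d f_t)(W_t(k_t v),W_t w)$, $\V_t df_t(W^{\prime}_t(k_t v,w))$ and $\V_t df_t(W_t(l_t w)) \int_0^t \<W_s(\dot k_s v),//_s dB_s\>$, leaving six. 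Each surviving term is either of the form $\V_{\tau \wedge T} f_{\tau \wedge T}(x_{\tau \wedge T}) \cdot A$ with $A \in \mathcal F_{\tau \wedge T}$, or a time integral $\int_0^{\tau \wedge T} \V_s f_s(x_s) g_s\, ds$ with $g_s$ adapted, or the single term $-2 \int_0^{\tau \wedge T} \V_s(df_s \odot dV_{T-s})(W_s(k_s v),W_s w)\, ds$, which carries $df_s$ rather than $f_s$. Using that $t \mapsto \V_t f_t(x_t)$ is a bounded martingale with $\E[\V_T f(x_T) \mathbf 1_{\{T<\zeta\}} \mid \mathcal F_t] = \V_t f_t(x_t)$ on $\{t \le \tau \wedge T\}$, together with Fubini, I would pull the factor $\V_T f(x_T) \mathbf 1_{\{T<\zeta\}}$ out of the first two kinds of term. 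Because $k$ and $\dot k$ are supported in $[0,\sigma \wedge S]$ (and $l$, $\dot l$ in $[0,\tau \wedge T]$) the integration limit $\tau \wedge T$ may then be replaced by $T$; this produces the first two lines of the asserted formula together with the ``pure product'' contribution $\E[\V_T f(x_T)\mathbf 1_{\{T<\zeta\}} \int_0^T \<W_s(\dot l_s w),//_s dB_s\> \int_0^T \<W_s(\dot k_s v),//_s dB_s\>]$.

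It remains to show that the three residual terms --- namely $\int_0^{\tau \wedge T}\V_s f_s(x_s) dV_{T-s}(W_s((l_s-1)w))\,ds \cdot \int_0^{\tau \wedge T}\<W_s(\dot k_s v),//_s dB_s\>$, the nested integral $\int_0^{\tau \wedge T}\V_s f_s(x_s) dV_{T-s}(W_s w) \int_0^s \<W_r(\dot k_r v),//_r dB_r\>\, ds$, and the $df \odot dV$ term --- add up, after taking expectations, to exactly the remaining three cross-terms of the product $\big(\int_0^T \<W_s(\dot l_s w),//_s dB_s\> + dV_{T-s}(W_s(l_s w))ds\big)\big(\int_0^T \<W_s(\dot k_s v),//_s dB_s\> + dV_{T-s}(W_s(k_s v))ds\big)$. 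The tools for this are: integration by parts against the finite-variation process $s \mapsto \int_0^s dV_{T-r}(W_r(k_r v))\,dr$ and against the martingale $\V_s f_s(x_s)$, whose martingale part is $\int_0^s \V_r df_r(//_r dB_r)$; the splitting $dV_{T-s}(W_s((l_s-1)w)) = dV_{T-s}(W_s(l_s w)) - dV_{T-s}(W_s w)$; and Lemma~\ref{lem:locmartone} applied both to $kv$ and to $l^\epsilon w$, which rewrites $\V_s df_s(W_s(k_s v))$ (and, on $[0,\sigma \wedge S]$, its $w$-analogue) as a martingale plus the corresponding Bismut integrand. As in the final step of the proof of Lemma~\ref{lem:locmarttwo}, the cross-variation terms that arise vanish identically because $\dot k$ is supported in $[0,\sigma \wedge S]$ while $\dot l$ and $l-1$ are supported in $[\sigma \wedge S, \tau \wedge T]$. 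Finally one lets $\epsilon \downarrow 0$ by dominated convergence.

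The main obstacle is precisely this last recombination: verifying that the symmetrized $df \odot dV$ term and the nested time integral cancel against each other and against the auxiliary martingale terms produced along the way, so that nothing survives beyond the stated product --- a somewhat delicate but ultimately routine bookkeeping exercise, of the same flavour as the one carried out inside Lemma~\ref{lem:locmarttwo}. Checking that the expression of Lemma~\ref{lem:locmarttwo} is a true (not merely local) martingale on $[0,\tau \wedge T]$ near the cutoff, and the passage $\epsilon \downarrow 0$, are standard given the purely geometric hypotheses and are handled exactly as in \cite{Thalmaier97} and \cite{APT}.
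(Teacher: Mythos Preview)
Your plan is essentially the paper's own proof: start from Lemma~\ref{lem:locmarttwo} with $l$ replaced by $l^\epsilon$, pass to a true martingale using the local boundedness of $P^V_tf$, $dP^V_tf$ and $\nabla dP^V_tf$ on $[\epsilon,T]\times\overline{D_2}$, take expectations, replace $\V_{\tau\wedge T}f_{\tau\wedge T}$ by $\V_Tf(x_T)\mathbf{1}_{\{T<\zeta\}}$ via the (strong) Markov property, and then do the algebraic recombination of the residual $dV$ and $df\odot dV$ terms into the stated product, finally letting $\epsilon\downarrow 0$. The only point on which the paper is more specific than your sketch is the ``routine bookkeeping'' step: rather than working directly with Lemma~\ref{lem:locmartone} and integration by parts, the paper treats each half of the symmetrized term $-2\int_0^T\V_s(df_s\odot dV_{T-s})(W_s(k_sv),W_sw)\,ds$ by invoking the already established gradient formula (Theorem~\ref{thm:locformone}) conditionally through the strong Markov property, and then appeals explicitly to the stochastic Fubini theorem \cite[Theorem~2.2]{Veraar} to swap the order in the resulting iterated integrals, which makes the cancellation with the nested time integral transparent. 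Your integration-by-parts description is the same manoeuvre in different clothing, so there is no gap, but when you write it up you will find it cleanest to cite Theorem~\ref{thm:locformone} (not just Lemma~\ref{lem:locmartone}) and the stochastic Fubini theorem at this point.
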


\begin{proof}
Modifying the process $l_t$ to $l^\epsilon_t$ as in the proof of Theorem \ref{thm:locformone}, it follows from Lemma \ref{lem:locmarttwo}, the strong Markov property, the boundedness of $P^V_tf$ on $[0,T] \times \overline{D}_2$ and the boundedness of $dP_t^Vf$ and $\nabla d P_t^Vf$ on $[\epsilon,T] \times \overline{D}_2$ that the formula
\begingroup
\allowdisplaybreaks
\begin{equation}
\begin{split}
&(\nabla d P^V_Tf)(v,w) \\
=\text{ }&  - \E \left[ \V_T f(x_T)\mathbf{1}_{\lbrace T < \zeta(x_0)\rbrace}\int_0^{T}\langle  W^{\prime}_s(\dot{k}_sv,w), //_s dB_s\rangle\right]\\
&+\E\left[ \V_T f(x_T)\mathbf{1}_{\lbrace T < \zeta(x_0)\rbrace}\int_{0}^T \< W_s( \dot{l}_sw),//_s dB_s\>\int_0^{T} \<W_s (\dot{k}_sv),//_sdB_s\>\right]\\
&-\E\left[\V_Tf(x_T)\mathbf{1}_{\lbrace T < \zeta(x_0)\rbrace} \int_0^{T} ((\nabla d V_{T-s})(W_s(k_sv),W_s(w))+(dV_{T-s})(W^{\prime}_s(k_sv,w)))ds\right]\\
&+\E\left[ \V_T f(x_T)\mathbf{1}_{\lbrace T < \zeta(x_0)\rbrace} \int_{0}^T dV_{T-s}( W_s(l_sw)) ds\int_0^{T} \<W_s (\dot{k}_sv),//_sdB_s\>\right] \\
&-\E\left[\V_T f(x_T)\mathbf{1}_{\lbrace T < \zeta(x_0)\rbrace}\int_0^{T} \left(\int_0^s dV(W_r(w))dr\right) \langle W_s(\dot{k}_sv),//_s dB_s\rangle  \right]\\
& -\E\left[ \int_0^{T} \V_sdf_s(W_s(k_s v))dV_{T-s}(W_s(w))ds\right]\\
& -\E\left[ \int_0^{T} \V_sdf_s(W_s(w))dV_{T-s}(W_s(k_s v))ds\right]
\end{split}
\end{equation}
\endgroup
holds with $l^\epsilon_t$ in place of $l_t$, and therefore in terms of $l_t$ by taking $\epsilon \downarrow 0$. Paying close attention to the assumptions on $l$ and $k$, it follows from this, by Theorem \ref{thm:locformone} and the strong Markov property, that
\begingroup
\allowdisplaybreaks
\begin{align}
& -\mathbb{E}\left[ \int_0^{T} \V_sdf_s(W_s (w))dV_{T-s}(W_s(k_s v))ds \right]\\
=\text{ }& +\mathbb{E}\left[ \V_T f(x_T)\mathbf{1}_{\lbrace T < \zeta(x_0)\rbrace} \int_0^{T} \int_r^{T} \<W_s (\dot{l}_sw) ,//_s dB_s\>dV_{T-r}(W_r(k_rv))dr \right]\\
\text{ } & - \mathbb{E}\left[\V_T f(x_T)\mathbf{1}_{\lbrace T < \zeta(x_0)\rbrace}\int_0^{T} \left( \int_0^r dV_{T-u}(W_u (w)) du\right)dV_{T-r}(W_r(k_rv))dr \right]\\
\text{ }&+ \mathbb{E}\left[ \V_T f(x_T)\mathbf{1}_{\lbrace T < \zeta(x_0)\rbrace}  \int_0^{T}dV_{T-r}(W_r(k_rv))dr \int_0^{T} dV_{T-s}(W_s( l_sw))ds \right]\\
=\text{ }& +\mathbb{E}\left[ \V_T f(x_T)\mathbf{1}_{\lbrace T < \zeta(x_0)\rbrace} \int_{0}^{T} \<W_s (\dot{l}_sw) ,//_s dB_s\>  \int_0^{T} dV_{T-r}(W_r(k_rv))dr \right]\\
\text{ } & - \mathbb{E}\left[\V_T f(x_T)\mathbf{1}_{\lbrace T < \zeta(x_0)\rbrace} \int_0^{T} \left( \int_0^r dV_{T-u}(W_u (w)) du\right)dV_{T-r}(W_r(k_rv))dr \right]\\
\text{ }&+ \mathbb{E}\left[ \V_T f(x_T)\mathbf{1}_{\lbrace T < \zeta(x_0)\rbrace} \int_0^{T}dV_{T-r}(W_r(k_rv))dr \int_0^{T} dV_{T-s}(W_s( l_sw))ds \right]
\end{align}
\endgroup
from which it follows that
\begingroup
\allowdisplaybreaks
\begin{equation}
\begin{split}
&(\nabla d P^V_Tf)(v,w) \\
=\text{ }&  - \E \left[ \V_T f(x_T)\mathbf{1}_{\lbrace T < \zeta(x_0)\rbrace}\int_0^T\langle  W^{\prime}_s(\dot{k}_sv,w), //_s dB_s\rangle\right]\\
&+\E\left[ \V_T f(x_T)\mathbf{1}_{\lbrace T < \zeta(x_0)\rbrace}\int_{0}^T \< W_s( \dot{l}_sw),//_s dB_s\>\int_0^{T} \<W_s (\dot{k}_sv),//_sdB_s\>\right]\\
&-\E\left[\V_Tf(x_T)\mathbf{1}_{\lbrace T < \zeta(x_0)\rbrace} \int_0^T ((\nabla d V_{T-s})(W_s(k_sv),W_s(w))+(dV_{T-s})(W^{\prime}_s(k_sv,w)))ds\right]\\
&+\E\left[ \V_T f(x_T)\mathbf{1}_{\lbrace T < \zeta(x_0)\rbrace} \int_{0}^T dV_{T-s}( W_s(l_sw)) ds\int_0^{T} \<W_s (\dot{k}_sv),//_sdB_s\>\right] \\
& + \mathbb{E}\left[ \V_T f(x_T) \mathbf{1}_{\lbrace T < \zeta(x)\rbrace}\int_{0}^{T} \<W_s \dot{l}_sw ,//_s dB_s\>  \int_0^{T} dV_{T-r}(W_r(k_rv))dr \right]\\
\text{ }&+ \mathbb{E}\left[ \V_T f(x_T) \mathbf{1}_{\lbrace T < \zeta(x)\rbrace} \int_0^{T}dV_{T-r}(W_r(k_rv)) dr \int_0^{T} dV_{T-s}( W_s (l_sw))ds \right]\\
&-\E\left[\V_T f(x_T)\mathbf{1}_{\lbrace T < \zeta(x)\rbrace} \int_0^{T}\left( \int_0^s dV_{T-r}(W_r(w))dr\right)\langle W_s(\dot{k}_sv),//_s dB_s\rangle \right]\\
&-\E\left[\V_T f(x_T)\mathbf{1}_{\lbrace T < \zeta(x)\rbrace} \int_0^{T}\left( \int_0^s dV_{T-r}(W_r(w))dr\right)dV_{T-s}(W_s(k_sv))ds\right]\\
& -\E\left[ \int_0^{T} \V_sdf_s(W_s(k_s v))dV_{T-s}(W_s(w))ds\right].
\end{split}
\end{equation}
\endgroup
Finally, by the stochastic Fubini theorem \cite[Theorem~2.2]{Veraar} we have
\small
\begin{equation}
\begin{split}
&\E\left[\V_T f(x_T)\mathbf{1}_{\lbrace T < \zeta(x_0)\rbrace}\int_0^{T} \int_0^s dV_{T-r}(W_r(w))dr (\langle W_s(\dot{k}_sv),//_s dB_s\rangle +dV_{T-s}(W_sk_sv)ds)\right]\\
=\text{ }&\E\left[\V_T f(x_T)\mathbf{1}_{\lbrace T < \zeta(x_0)\rbrace}\int_0^{T} \left(\int_s^{T} \langle W_r(\dot{k}_rv),//_r dB_r\rangle +dV_{T-r}(W_rk_rv)dr\right) dV_{T-s}(W_s(w))ds\right]\\
\end{split}
\end{equation}
\normalsize
which cancels the final three terms in the previous equation, by the strong Markov property, Theorem \ref{thm:locformone} and the assumptions on $k$.
\end{proof}

For the case $Z=0$ and $V=0$, Theorem \ref{thm:locformtwo} reduces to \cite[Theorem~2.1]{APT}.

\begin{remark}\label{rem:notsmooth}
We have assumed that $V$ is bounded below and smooth. However, so long as $V$ is bounded below and continuous with $V_t \in C^1$ for each $t \in [0,T]$ and $P^V f\in C^{1,3}([\epsilon,T]\times M)$ then the results of Subsection \ref{ss:firstderlocform} evidently remain valid. Similarly, the results of Subsection \ref{ss:secderlocformhalf} evidently remain valid if $V$ is bounded below, $C^1$ with $V_t \in C^2$ for each $t \in [0,T]$ and $P^V f\in C^{1,4}([\epsilon,T]\times M)$. Similarly, the results of Subsection \ref{ss:secderlocform} evidently remain valid if $V$ is bounded below and continuous with $V_t \in C^2$ for each $t \in [0,T]$ and $P^V f\in C^{1,4}([\epsilon,T]\times M)$.
\end{remark}

\section{Stationary Solutions}\label{sec:statsolns}

Now suppose $\phi\in C^2(D) \cap C(\overline{D})$ solves the eigenvalue equation
\begin{equation}
(L - V)\phi = -E \phi
\end{equation}
on the regular domain $D$, for some $E \in \mathbb{R}$ and a function $V\in C^2$ which does not depend on time and which is bounded below. Denoting by $\tau$ the first exit time from $D$ of the diffusion $x_t$ with generator $L$ and assuming $x_0 \in D$, one has, in analogy to the Feynman-Kac formula \eqref{eq:fkform}, the formula
\begin{equation}
\phi(x_0) = \E \left[ \V_\tau \phi(x_\tau) e^{E\tau}\right].
\end{equation}
Furthermore, the methods of the previous section can easily be adapted to find formulae for the derivatives of $\phi$. In particular, one simply sets $f_t = \phi$, replaces $V_{T-t}$ with $V-E$ and the calculations carry over almost verbatim (although there is no application of the strong Markov property; in this case the local martingale property is enough). In particular, for the derivative $d\phi$, supposing $k$ is a bounded adapted process with paths in the Cameron-Martin space $L^{1,2}([0,\infty),\Aut(T_{x_0}M))$ with $k_0=1$, $k_t=0$ for $t \geq \tau$ and $\int_0^{\tau} |\dot{k}_s|^2 ds\in L^1$, one obtains
\begin{equation}
(d\phi)(v)  = - \mathbb{E}\left[  \V_\tau \phi(x_\tau)e^{E \tau} \int_0^{\tau} \langle W_s (\dot{k}_s v),//_sdB_s\rangle+dV(W_s (k_sv)) ds \right]
\end{equation}
for each $v \in T_{x_0}M$. When $V=0$ and $E=0$ this formulae reduces to the one given in \cite{Thalmaier97}. Similarly, denoting by $D_1$ a regular domain with $x_0 \in D_1$ and $\overline{D_1}\subset D $ and by $\sigma$ the first exit time of $x_t$ from $D_1$, supposing $k,l$ are bounded adapted processes with paths in the Cameron-Martin space $L^{1,2}([0,\infty);\Aut(T_{x_0}M))$ such that $k_s=0$ for $s \geq \sigma$, $k_0=1$, $l_s = 1$ for $s \leq \sigma$, $l_s = 0$ for $s \geq \tau$, $\int_0^{\sigma} |\dot{k}_s|^2 ds\in L^1$ and $\int_{\sigma }^{\tau} |\dot{l}_s|^2 ds \in L^1$, for the Hessian of $\phi$ one obtains
\begin{equation}
\begin{split}
&(\nabla d \phi)(v,w) \\
=\text{ }&  - \E \left[ \V_\sigma \phi(x_\sigma)e^{E \sigma}\int_0^\sigma \langle  W^{\prime}_s(\dot{k}_sv,w), //_s dB_s\rangle\right]\\
&-\E\left[  \V_\sigma \phi(x_\sigma)e^{E \sigma}\int_0^\sigma ((\nabla d V)(W_s(k_sv),W_s(w))+(dV)(W^{\prime}_s(k_sv,w)))ds\right]\\
&+\E\left[ \V_\tau \phi(x_\tau)e^{E \tau}\left(\int_{0}^\tau \< W_s( \dot{l}_sw),//_s dB_s\> + dV( W_s(l_sw)) ds\right)\right.\\
&\quad\quad\quad\quad\quad\quad\quad\quad\quad\quad\quad\quad\quad\quad\quad\left.\cdot\left(\int_0^\sigma \<W_s (\dot{k}_sv),//_sdB_s\>+ dV(W_s(k_sv))ds\right)\right]
\end{split}
\end{equation}
for all $v,w \in T_{x_0}M$.

\section{Local and Global Estimates}\label{sec:localest}

\subsection{Gradient}\label{ss:firstderlocest}

\begin{theorem}\label{thm:locestone}
Suppose $D_0,D$ are regular domains with $x_0 \in \overline{D_0} \subset D$, $V$ bounded below and $T>0$. Set
\begin{equation}
\begin{split}
\underline{\kappa}_D :=\text{ }& \inf \lbrace {\Ric}_Z(v,v):v \in T_y M,y \in D,|v|=1\rbrace;\\
v_D:=\text{ }& \sup \lbrace |(dV_t)_y(v)|: v \in T_y M,y \in D,|v|=1, t \in [0,T]\rbrace.\\
\end{split}
\end{equation}
Then there exists a positive constant $$C\equiv C(n,T,\inf V,\underline{\kappa}_D,v_D,d(\partial D_0,\partial D))$$ such that
\begin{equation}\label{eq:locesttime}
|dP^V_tf_{x_0}| \leq \frac{C}{\sqrt{t}}|f|_\infty
\end{equation}
for all $0 < t \leq T$, $x_0 \in D_0$ and $f \in \mathcal{B}_b$.
\end{theorem}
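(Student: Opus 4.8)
The plan is to specialise the Bismut formula of Theorem~\ref{thm:locformone} to a carefully chosen cut-off process $k$, as in \cite{Thalmaier97} and \cite{APT}, and then to bound every factor by Cauchy--Schwarz. Fix $t \in (0,T]$ and, for $x_0 \in D_0$, let $\tau$ be the first exit time of $x$ from $D$; since $\overline{D}$ is compact, $\tau$ precedes the explosion time $\zeta(x_0)$ and $\tau\wedge t \le t$. I would apply Theorem~\ref{thm:locformone} with $t$ in place of $T$ and with a scalar-valued adapted process $k = k^{(t)}$ satisfying $k_0=1$, $0 \le k_s \le 1$, $k_s = 0$ for $s \ge \tau\wedge t$, $\int_0^{\tau\wedge t}|\dot k_s|^2\,ds \in L^1$ and -- this is the crucial property --
\[
\E\int_0^{\tau\wedge t}|\dot k_s|^2\,ds \le \frac{c}{t},
\]
with $c$ independent of $t$, of $x_0 \in D_0$ and of $f$. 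Such a $k$ is obtained by the ``double cut-off'' localisation of \cite{Thalmaier97}: one superposes a component that decreases at rate of order $1/t$ (responsible for the factor $t^{-1/2}$) with a component built from a fixed positive function on $D$ vanishing on $\partial D$, which kills $k$ before $x$ can leave $D$.

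Next I would control the three ingredients of \eqref{eq:formulalocone} (with $T$ replaced by $t$) by purely geometric bounds valid on $D$. Since $V \ge \inf V$, one has $0 < \V_t \le e^{T\max(0,-\inf V)} =: c_1$. Since $W$ solves $DW_s = -\tfrac12{\Ric}^\sharp_Z W_s$ with $W_0 = \id$ and ${\Ric}_Z \ge \underline{\kappa}_D$ on $D$, Gronwall's inequality applied while $x_s \in D$ (i.e. for $s < \tau$) gives $\|W_{s\wedge\tau}\| \le e^{-\frac12\underline{\kappa}_D(s\wedge\tau)} \le e^{\frac12 T\max(0,-\underline{\kappa}_D)} =: c_2$ for $s \le T$; what $W$ does after $\tau$ is irrelevant, as $k_s = \dot k_s = 0$ there. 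Finally $|(dV_{t-s})_{x_s}| \le v_D$ whenever $s < \tau$, because then $x_s \in D$ and $t-s \in [0,T]$.

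Then I would combine these. Bounding $|f(x_t)| \le |f|_\infty$, pulling the absolute value inside the expectation, and splitting the bracket in \eqref{eq:formulalocone} into its local-martingale part $M^{(v)} := \int_0^t \langle W_s(\dot k_s v),//_s dB_s\rangle$ and its drift part $A^{(v)} := \int_0^t (dV_{t-s})(W_s(k_s v))\,ds$, one arrives at $|(dP^V_t f)(v)| \le c_1|f|_\infty(\E|M^{(v)}| + \E|A^{(v)}|)$. The bound on $\|W_{\cdot\wedge\tau}\|$ together with $\int_0^{\tau\wedge t}|\dot k_s|^2\,ds \in L^1$ makes $M^{(v)}$ a true $L^2$-martingale, so by It\^o's isometry and Jensen's inequality $\E|M^{(v)}| \le (\E\int_0^t |W_s(\dot k_s v)|^2\,ds)^{1/2} \le c_2|v|\,(c/t)^{1/2}$; and pathwise $|A^{(v)}| \le v_D c_2 |v|\,(\tau\wedge t) \le v_D c_2 |v|\,t \le v_D c_2 T^{3/2}|v|\,t^{-1/2}$. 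Adding these and taking the supremum over unit $v \in T_{x_0}M$ gives $|dP^V_t f_{x_0}| \le C\,t^{-1/2}|f|_\infty$ with $C := c_1 c_2(\sqrt{c} + v_D T^{3/2})$, a constant depending only on $n$ (through $c$), $T$, $\inf V$, $\underline{\kappa}_D$ and $v_D$.

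The one genuinely delicate step is the construction of $k$ and, above all, the verification of the displayed bound $\E\int_0^{\tau\wedge t}|\dot k_s|^2\,ds \le c/t$ \emph{with $c$ not depending on the starting point $x_0 \in D_0$}: because $k$ is forced to vanish at the random time $\tau$, its derivative must blow up near $\tau$, and one has to check that this singularity stays square-integrable on the scale $1/t$, uniformly in $x_0 \in D_0$ and $t \in (0,T]$. The remaining ingredients -- the Gronwall estimate for $W$, the elementary bounds on $\V_t$ and $dV$, It\^o's isometry and Cauchy--Schwarz -- are routine. (Taking an exhaustion of $M$ and $k_s = (t-s)^+/t$ then recovers the global gradient bound when ${\Ric}_Z$ is bounded below on all of $M$.)
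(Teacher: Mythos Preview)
Your proposal is correct and follows essentially the same route as the paper: apply Theorem~\ref{thm:locformone}, bound $\V_t$ and $\|W_{s\wedge\tau}\|$ via $\inf V$ and $\underline{\kappa}_D$, and use Cauchy--Schwarz/It\^o isometry together with the Thalmaier--Wang cut-off process $k$. The paper is slightly terser, citing \cite{APT} and \cite{ThalmaierWanggradest} for the construction of $k$ and stating the bound in the equivalent form $\E[\int_0^T|\dot k_s|^2\,ds]^{1/2}\le \tilde C/\sqrt{1-e^{-\tilde C^2 T}}$; note that, as you rightly flag, the constant $\tilde C$ (and hence $C$) also depends on $d(\partial D_0,\partial D)$, a dependence the paper makes explicit in the proof but not in the theorem statement.
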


\begin{proof}
According to \cite{APT}, the process $k_t$ appearing in Theorem \ref{thm:locformone} can be chosen so that $|k_s| \leq c(T)$ for all $s \in [0,T]$, almost surely, with
\begin{equation}
\mathbb{E}\left[ \int_0^{T} |\dot{k}_s|^2 ds\right]^\frac{1}{2}\leq \frac{\tilde{C}}{\sqrt{1-e^{-\tilde{C}^2T}}}
\end{equation}
for a positive constant $\tilde{C}$ which depends continuously on $\underline{\kappa}$, $n$ and $d(\partial D_0,\partial D)$. The details of this can be found in \cite{ThalmaierWanggradest}. By Theorem \ref{thm:locformone} and the Cauchy-Schwarz inequality, using equation \eqref{eq:eqfordpt} and the parameter $\underline{\kappa}_D$ to control the size of the damped parallel transport, we have
\begin{equation}
\begin{split}
|dP^V_Tf| \leq \text{ }& |f|_\infty e^{-\inf V} \left(\mathbb{E}\left[ \mathbf{1}_{\lbrace T < \zeta(x_0)\rbrace} \int_0^{T} | W_s|^2 |\dot{k}_s|^2 ds\right]^{\frac{1}{2}}\right.\\
&\quad\quad\quad\quad\quad\quad\left. + \mathbb{E}\left[\mathbf{1}_{\lbrace T < \zeta(x_0)\rbrace} \int_0^{T} |dV_{T-s}||W_s| |k_s| ds \right]\right)\\
\leq \text{ }& |f|_\infty e^{( -\inf V-\frac{1}{2}(\underline{\kappa}_D \wedge 0)) T} \left( \mathbb{E}\left[ \int_0^{T} |\dot{k}_s|^2 ds\right]^{\frac{1}{2}}+  v_D \E\left[\int_0^T |k_s| ds\right]\right)
\end{split}
\end{equation}
so the estimate \eqref{eq:locesttime} follows by substituting the bounds on $k$ and $\dot{k}$.
\end{proof}

Note that in the above theorem, the dependence of the constant $C$ on $d(D_0,D)$ is such that if one tries to shrink $D$ onto $D_0$, so as to reduce the information needed about $\Ric_Z$ and $dV$, then the constant blows up at rate $1/d(D_0,D)$. There is therefore a trade-off between the size of the domain and the size of the constant. This behaviour is unavoidable and also occurs with respect to the domains $D_0,D_1$ and $D_2$ which appear in Theorems \ref{thm:locestonehalf} and \ref{thm:locesttwo} below.

\begin{corollary}\label{cor:globestone}
Suppose ${\Ric}_Z$ is bounded below with $|dV|$ bounded and $V$ bounded below. Then for all $T>0$ there exists a positive constant $C\equiv C(n,T)$ such that
\begin{equation}
|dP^V_tf_{x}| \leq \frac{C}{\sqrt{t}}|f|_\infty
\end{equation}
for all $0 < t \leq T$, $x \in M$ and $f \in \mathcal{B}_b$.
\end{corollary}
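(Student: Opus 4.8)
The plan is to deduce the global bound directly from the local estimate of Theorem~\ref{thm:locestone}; essentially all that is required is to notice that, once ${\Ric}_Z$ is globally bounded below and $|dV|$ is globally bounded, the constant supplied by that theorem may be chosen independently of the base point and of the localising domains.

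First I would fix the data: let $\kappa\geq 0$ satisfy ${\Ric}_Z\geq -\kappa$ on all of $M$, set
\[
\beta:=\sup\{|(dV_t)_y(v)|:y\in M,\ v\in T_yM,\ |v|=1,\ t\in[0,T]\}<\infty,
\]
and recall $\inf V:=\inf_M V>-\infty$. In the statement the notation $C(n,T)$ suppresses these three numbers, so I would let $C$ depend also on $\kappa$, $\beta$ and $\inf V$. Now fix $x\in M$. By Hopf--Rinow the closed balls of $M$ are compact, so, choosing radii at which the distance spheres are smooth hypersurfaces, I can find regular domains $D_0,D$ with $x\in D_0$, $\overline{D_0}\subset D$ and $d(\partial D_0,\partial D)\geq 1$. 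Applying Theorem~\ref{thm:locestone} with these domains gives
\[
|dP^V_tf_x|\leq\frac{C_1}{\sqrt t}\,|f|_\infty,\qquad 0<t\leq T,\ f\in\mathcal{B}_b,
\]
where $C_1=C(n,T,\inf V,\underline{\kappa}_D,v_D)$.

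It then remains to bound $C_1$ by a quantity depending only on $n$, $T$, $\kappa$, $\beta$ and $\inf V$, uniformly in $x$ and in the choice of $D_0,D$. Because ${\Ric}_Z\geq-\kappa$ on $M\supseteq D$ one has $\underline{\kappa}_D\wedge 0\geq -\kappa$, and because $|dV|$ is globally bounded one has $v_D\leq\beta$; also $\inf_D V\geq \inf V$. Inspecting the proof of Theorem~\ref{thm:locestone}, $C_1$ is assembled from the factor $e^{(-\inf V-\frac{1}{2}(\underline{\kappa}_D\wedge 0))T}\leq e^{(-\inf V+\frac{1}{2}\kappa)T}$, from the bound $\tilde{C}/\sqrt{1-e^{-\tilde{C}^2T}}$ on $\E[\int_0^T|\dot k_s|^2\,ds]^{1/2}$, and from $v_D\,\E[(\int_0^T|k_s|\,ds)^2]^{1/2}\leq\beta\,c(T)\,T$. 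Since ${\Ric}_Z\geq -\kappa$ on $D$ and $d(\partial D_0,\partial D)\geq 1$, the construction of the cut-off process $k$ in \cite{APT,ThalmaierWanggradest} applies with Ricci-parameter $-\kappa$ and collar-width $1$, hence furnishes such a $k$ with $\tilde{C}=\tilde{C}(n,\kappa)$ and with $c(T)$ depending only on $T$, both independent of $x$ and of $D_0,D$. Combining these observations, $C_1\leq C(n,T)$ uniformly in $x$; taking the supremum over $x\in M$ proves the corollary.

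The only substantive step is this last uniformity: one must use that the construction of $k$ and its $L^{1,2}$-bound in \cite{ThalmaierWanggradest} depend on the domain solely through a lower Ricci-type bound and the collar width $d(\partial D_0,\partial D)$ — and not through, say, the injectivity radius or volume of $D$ — so that fixing the width and replacing $\underline{\kappa}_D$ by $-\kappa$ eliminates all dependence on $x$; once this is granted, the remainder is bookkeeping. A minor technical wrinkle is the existence, for every $x\in M$, of relatively compact regular domains $D_0\subset D$ with $x\in D_0$ and $d(\partial D_0,\partial D)\geq 1$, which follows from completeness together with the fact that almost every distance sphere is a smooth hypersurface (or, if one prefers, by using sublevel sets of a smoothed exhaustion function of $M$).
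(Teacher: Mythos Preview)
Your approach is essentially identical to the paper's: reduce to Theorem~\ref{thm:locestone} and observe that its constant depends on the domains only through the collar width $d(\partial D_0,\partial D)$, and on the geometry only through quantities that are globally controlled by hypothesis. The one slip is in your ``minor technical wrinkle'': when $M$ is compact with diameter less than $2$, there are no nested regular domains with $d(\partial D_0,\partial D)\geq 1$, so neither Hopf--Rinow nor a smoothed exhaustion can produce them. The paper deals with this by a case split: for compact $M$ it takes $D_0=B_{\inj(M)/4}(x_0)$ and $D=B_{\inj(M)/2}(x_0)$, giving the uniform collar width $\inj(M)/4$; for non-compact $M$ it uses width $1$ exactly as you do. In either case the width is independent of $x_0$, and the rest of your bookkeeping goes through unchanged.
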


\begin{proof}
As explained in the proof of Theorem \ref{thm:locestone}, the dependence on $D_0$ of the constant appearing there is via the quantity $d(\partial D_0,\partial D)$. If $M$ is compact then the injectivity radius $\inj(M)$ is positive and we can choose $D_0= B_{\inj(M)/4}(x_0)$ and $D= B_{\inj(M)/2}(x_0)$, in which case $d(\partial D_0,\partial D) =\inj(M)/4$. Conversely, if $M$ is non-compact then for each $x_0 \in M$ there exist $D_0,D$ with $x_0 \in \overline{D_0} \subset D$ and $d(\partial D_0,\partial D) = 1$. Consequently, the result follows from Theorem \ref{thm:locestone}.
\end{proof}

\subsection{Generator}

\begin{theorem}\label{thm:locestonehalf}
Suppose $D_0,D_1$ and $D_2$ are regular domains with $x_0 \in \overline{D_0} \subset D_1$, $\overline{D_1} \subset D_2$, $V$ bounded below and $T>0$. Set
\begin{equation}
\begin{split}
\kappa_{D_2} :=\text{ }& \sup \lbrace |{\Ric}_Z(v,v)|: v \in T_y M,y \in D_2,|v|=1\rbrace;\\
v_{D_2}:=\text{ }& \sup \lbrace |(dV_t)_y(v)|: v \in T_y M,y \in D_2,|v|=1, t \in [0,T]\rbrace;\\
z_{D_1}:=\text{ }& \sup \lbrace |Z|_y: y \in D_1 \rbrace.
\end{split}
\end{equation}
Then there exists a positive constant $$C\equiv C(n,T,\inf V,\kappa_{D_2},v_{D_2},z_{D_1},d(\partial D_0,\partial D_1),d(\partial D_0,\partial D_2))$$ such that
\begin{equation}
|L P^V_tf_{x_0}| \leq \frac{C}{t}|f|_\infty
\end{equation}
for all $0 < t \leq T$, $x_0 \in D_0$ and $f \in \mathcal{B}_b$.
\end{theorem}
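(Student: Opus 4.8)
The plan is to mimic the proof of Theorem \ref{thm:locestone}, applying the Cauchy-Schwarz inequality to the formula of Theorem \ref{thm:formhalf} with suitable choices of the processes $k$ and $l$. First I would recall from \cite{APT} and \cite{ThalmaierWanggradest} that on the inner pair $\overline{D_0}\subset D_1$ one can choose $k$ adapted with $k_0=1$, $k_s=0$ for $s\geq \sigma\wedge S$ (take $S=t/2$), with $|k_s|\leq c_1$ almost surely and
\begin{equation}
\E\left[\int_0^{\sigma\wedge S}|\dot k_s|^2 ds\right]^{1/2}\leq \frac{C_1}{\sqrt{t}},
\end{equation}
the constant $C_1$ depending continuously on $n$, $\kappa_{D_2}$ and $d(\partial D_0,\partial D_1)$; and likewise on the outer pair $\overline{D_1}\subset D_2$ one can choose $l$ adapted with $l_s=1$ for $s\leq \sigma\wedge S$, $l_s=0$ for $s\geq \tau\wedge T$, $|l_s|\leq c_2$ and $\E[\int_{\sigma\wedge S}^{\tau\wedge T}|\dot l_s|^2 ds]^{1/2}\leq C_2/\sqrt{t}$ (here the relevant time increment is again of order $t$).

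Next I would bound each of the two terms in the formula of Theorem \ref{thm:formhalf}. For the first term, by It\^o's isometry and $|Z|\leq z_{D_1}$ on the support of $\dot k$,
\begin{equation}
\left|\E\left[\V_T f(x_T)\mathbf 1_{\{T<\zeta\}}\int_0^T\langle \dot k_s Z,//_s dB_s\rangle\right]\right|\leq |f|_\infty e^{-T\inf V}\, z_{D_1}\,\E\left[\int_0^T|\dot k_s|^2 ds\right]^{1/2}\leq \frac{C}{\sqrt t}|f|_\infty,
\end{equation}
which is already better than the claimed $1/t$. For the second term I would apply Cauchy-Schwarz twice: first to pull out $\V_T f(x_T)\mathbf 1_{\{T<\zeta\}}$ in $L^\infty$, then to the product of the two remaining stochastic integrals, giving a bound by
\begin{equation}
\tfrac12 |f|_\infty e^{-T\inf V}\,\E\left[\left(\int_0^T\langle W_s\dot l_s,//_s dB_s\rangle + dV_{T-s}(W_s l_s)\,ds\right)^2\right]^{1/2}\E\left[\left(\int_0^T \dot k_s W_s^{-1}//_s dB_s\right)^2\right]^{1/2}.
\end{equation}
Using the covariant ODE $DW_t=-\tfrac12{\Ric}^\sharp_Z W_t$ one gets the pathwise bounds $|W_s|\vee|W_s^{-1}|\leq e^{\kappa_{D_2}T/2}$ on $[0,\tau\wedge T]$, so by It\^o's isometry the first factor is controlled by $e^{\kappa_{D_2}T/2}(\E[\int_0^T|\dot l_s|^2 ds]^{1/2}+v_{D_2}\,c_2\,T^{1/2})\leq C/\sqrt t$ and the second by $e^{\kappa_{D_2}T/2}\E[\int_0^T|\dot k_s|^2 ds]^{1/2}\leq C/\sqrt t$. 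Multiplying the two gives the $1/t$ rate, and collecting constants yields the stated $C\equiv C(n,T,\inf V,\kappa_{D_2},v_{D_2},z_{D_1})$ (the dependence on $D_0,D_1$ being absorbed, as in Theorem \ref{thm:locestone}, through the distances $d(\partial D_0,\partial D_1)$ and $d(\partial D_1,\partial D_2)$, which are fixed once the domains are).

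The main obstacle I anticipate is bookkeeping rather than anything deep: one must verify that the choices of $k$ and $l$ simultaneously satisfy the hypotheses of Theorem \ref{thm:formhalf} (in particular the $L^1$ integrability of $\int|\dot k_s|^2 ds$ and $\int|\dot l_s|^2 ds$, which follows from the quantitative $L^1$—indeed $L^2$—bounds above) and that the relevant supremum norms are taken over the correct domains—$Z$ only enters on $D_1$, while the curvature and $dV$ enter on $D_2$—so that the constant has exactly the claimed dependence. A secondary point is that the second moments of the stochastic integrals must be justified via optional stopping / the $L^1$ bounds on the quadratic variations, exactly as in \cite{Thalmaier97} and \cite{APT}; once that is in place the estimate is a routine application of Cauchy-Schwarz.
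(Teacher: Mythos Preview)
Your proposal is correct and follows essentially the same approach as the paper: apply Cauchy--Schwarz to the formula of Theorem~\ref{thm:formhalf}, choose $k$ and $l$ as in \cite{APT,ThalmaierWanggradest} so that $\E[\int|\dot k_s|^2ds]^{1/2}$ and $\E[\int|\dot l_s|^2ds]^{1/2}$ are each of order $t^{-1/2}$, and bound $|W_s|\vee|W_s^{-1}|$ via the curvature constant $\kappa_{D_2}$. The paper's proof differs only in cosmetic details (it writes the $\dot k,\dot l$ bounds in the form $\tilde C_i/\sqrt{1-e^{-\tilde C_i^2 t}}$ and takes the second distance to be $d(\partial D_0,\partial D_2)$ rather than $d(\partial D_1,\partial D_2)$).
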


\begin{proof}
According to \cite{APT}, the processes $k_t$ and $l_t$ appearing in Theorem \ref{thm:formhalf} can be chosen so that
\begin{equation}
\begin{split}
&|k_s| \leq c_1(n,\kappa_{D_1},T,d(\partial D_0,\partial D_1)),\\
&|l_s| \leq c_2(n,\kappa_{D_2},T,d(\partial D_0,\partial D_2))
\end{split}
\end{equation}
for all $s \in [0,T]$, almost surely, with
\begin{equation}
\mathbb{E}\left[ \int_0^{T} |\dot{k}_s|^2 ds\right]^\frac{1}{2}\leq \frac{\tilde{C}_1}{\sqrt{1-e^{-\tilde{C}_1^2T}}}, \quad \mathbb{E}\left[ \int_0^{T} |\dot{l}_s|^2 ds\right]^\frac{1}{2}\leq \frac{\tilde{C}_2}{\sqrt{1-e^{-\tilde{C}_2^2T}}}
\end{equation}
for positive constants $\tilde{C}_1$ and $\tilde{C}_2$ which depend continuously on $\kappa$, $n$ and on $d(\partial D_0,\partial D_1)$ and $d(\partial D_0,\partial D_2)$, respectively. By Theorem \ref{thm:formhalf} and the Cauchy-Schwarz inequality we have
\begin{equation}
\begin{split}
&|L(P^V_Tf)(x_0)| \\
\leq\text{ }& e^{-\inf V}|f|_\infty z_{D_1} \E\left[\int_0^T  |\dot{k}_s|^2ds\right]^\frac{1}{2}\\
& +\frac{1}{2}|f|_\infty e^{T(\kappa_{D_2}-\inf V)} \E \left[ \int_0^T |\dot{k}_s|^2ds\right]^\frac{1}{2} \left( \E\left[ \int_0^T |\dot{l}_s|^2ds\right]^\frac{1}{2} + v_{D_2} \E\left[\left(\int_0^T |l_s| ds\right)^2\right]^\frac{1}{2}\right)
\end{split}
\end{equation}
so the result follows by substituting the bounds on $k,\dot{k},l$ and $\dot{l}$.
\end{proof}

\begin{corollary}\label{cor:globesthalf}
Suppose $|{\Ric}_Z|$, $|dV|$, $|Z|$, are bounded with $V$ bounded below. Then there exists a positive constant $C\equiv C(n,T)$ such that
\begin{equation}
|L P^V_tf_{x}| \leq \frac{C}{t}|f|_\infty
\end{equation}
for all $0 < t \leq T$, $x \in M$ and $f \in \mathcal{B}_b$.
\end{corollary}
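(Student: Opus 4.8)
The plan is to globalise Theorem~\ref{thm:locestonehalf} in exactly the manner that Corollary~\ref{cor:globestone} globalises Theorem~\ref{thm:locestone}. Recall from the proof of Theorem~\ref{thm:locestonehalf} that the constant $C$ there depends on the regular domains $D_0,D_1,D_2$ only through the geometric quantities $\kappa_{D_2}$, $v_{D_2}$, $z_{D_1}$ together with the separations $d(\partial D_0,\partial D_1)$ and $d(\partial D_0,\partial D_2)$ (the latter two entering via the admissible choices of the processes $k$ and $l$, as in \cite{APT}). Under the present hypotheses the first three quantities are controlled by their global counterparts, namely $\kappa_{D_2}\leq\sup_M|{\Ric}_Z|$, $v_{D_2}\leq\sup|dV|$ and $z_{D_1}\leq\sup_M|Z|$, all of which are finite and independent of the domains. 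So it will suffice to exhibit, for each $x\in M$, a triple of regular domains with $x\in\overline{D_0}\subset D_1$, $\overline{D_1}\subset D_2$ for which $d(\partial D_0,\partial D_1)$ and $d(\partial D_0,\partial D_2)$ are bounded below by a positive constant not depending on $x$.

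First I would treat the compact case. Then $\inj(M)>0$, and setting $D_0=B_{\inj(M)/6}(x)$, $D_1=B_{\inj(M)/3}(x)$, $D_2=B_{\inj(M)/2}(x)$ gives three nested regular domains (geodesic balls of radius below the injectivity radius are regular domains), with $d(\partial D_0,\partial D_1)=\inj(M)/6$ and $d(\partial D_0,\partial D_2)=\inj(M)/3$, both bounded below independently of $x$. If instead $M$ is non-compact, then for each $x\in M$ one can find regular domains $D_0,D_1,D_2$ with $x\in\overline{D_0}\subset D_1$, $\overline{D_1}\subset D_2$, $d(\partial D_0,\partial D_1)\geq 1$ and $d(\partial D_0,\partial D_2)\geq 1$, exactly as in the proof of Corollary~\ref{cor:globestone}.

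In either case, applying Theorem~\ref{thm:locestonehalf} with these domains yields $|L P^V_tf_{x}|\leq (C/t)|f|_\infty$ for all $0<t\leq T$ and $f\in\mathcal{B}_b$, with $C$ depending only on $n$, $T$, $\inf V$, $\sup_M|{\Ric}_Z|$, $\sup|dV|$ and $\sup_M|Z|$; since the last four are fixed finite numbers, $C$ depends only on $n$ and $T$. As $x\in M$ was arbitrary, this is the claimed estimate. The only mildly delicate point is the construction of the nested regular domains with uniform separation in the non-compact case, but this is identical to the situation already handled in Corollary~\ref{cor:globestone}, so no new argument is needed; everything else is just bookkeeping of the dependence of the constant in Theorem~\ref{thm:locestonehalf}.
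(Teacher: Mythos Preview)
Your proposal is correct and follows essentially the same approach as the paper: the paper's proof simply states that the result follows from Theorem~\ref{thm:locestonehalf} by eliminating the domain dependence exactly as in Corollary~\ref{cor:globestone}, which is precisely what you have carried out in detail for the three nested domains. Your explicit choice of radii in the compact case and your appeal to the non-compact construction from Corollary~\ref{cor:globestone} are a faithful elaboration of the paper's one-line argument.
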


\begin{proof}
The result follows from Theorem \ref{thm:locestonehalf}, since as in Corollary \ref{cor:globestone} any dependence of the constant on $D_0,D_1$ and $D_2$ can be eliminated.
\end{proof}

\subsection{Hessian}

\begin{theorem}\label{thm:locesttwo}
Suppose $D_0,D_1$ and $D_2$ are regular domains with $x_0 \in \overline{D_0} \subset D_1$, $\overline{D_1} \subset D_2$, $V$ bounded below and $T>0$. Set
\begin{equation}
\begin{split}
\kappa_{D_2} :=\text{ }& \sup \lbrace |{\Ric}_Z(v,v)|: v \in T_y M,y \in D_2,|v|=1\rbrace;\\
v_{D_2}:=\text{ }& \sup \lbrace |(dV_t)_y(v)|: v \in T_y M,y \in D_2,|v|=1, t \in [0,T]\rbrace;\\
v'_{D_1}:=\text{ }& \sup \lbrace |(\nabla d V_t)_y(v,v)|: v \in T_y M,y \in D_1,|v|=1,t\in [0,T]\rbrace;\\
\rho_{D_1}:=\text{ }& \sup \lbrace |R(w,v)v|: v,w \in T_y M,y \in D_1,|v|=|w|=1\rbrace;\\
\rho'_{D_1}:=\text{ }& \sup \lbrace |(\nabla{\Ric}^\sharp_Z+d^\star R -2R(Z))(v,v)|: v \in T_y M,y \in D_1,|v|=1\rbrace.
\end{split}
\end{equation}
Then there exists a positive constant $$C\equiv C(n,T,\inf V,\kappa_{D_2},v_{D_2},v'_{D_1},\rho_{D_1},\rho'_{D_1},d(\partial D_0,\partial D_1),d(\partial D_0,\partial D_2))$$ such that
\begin{equation}
|\nabla d P^V_tf_{x_0}| \leq \frac{C}{t}|f|_\infty
\end{equation}
for all $0 < t \leq T$, $x_0 \in D_0$ and $f \in \mathcal{B}_b$.
\end{theorem}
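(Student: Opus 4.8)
The plan is to follow the same template as the proofs of Theorems~\ref{thm:locestone} and~\ref{thm:locestonehalf}: apply the Bismut-type formula for $\nabla d P^V_Tf$ given by Theorem~\ref{thm:locformtwo}, choose the adapted processes $k$ and $l$ so that their derivatives are controlled in $L^2$, and then estimate each term by the Cauchy-Schwarz inequality together with uniform bounds on the damped parallel transport $W_s$, its inverse, and the auxiliary process $W'_s$. First I would fix regular domains $D_0,D_1,D_2$ with $x_0\in\overline{D_0}\subset D_1$, $\overline{D_1}\subset D_2$ and, as in \cite{APT} and \cite{ThalmaierWanggradest}, select $k$ supported on $[0,\sigma\wedge S]$ and $l$ interpolating between $1$ on $[0,\sigma\wedge S]$ and $0$ after $\tau\wedge T$, with $|k_s|,|l_s|$ bounded by constants depending only on $n$, the curvature bound $\kappa_{D_2}$, $T$ and the distances between the boundaries, and with
\begin{equation}
\E\left[\int_0^T|\dot k_s|^2\,ds\right]^{1/2}\leq \frac{\tilde C_1}{\sqrt{1-e^{-\tilde C_1^2 T}}},\qquad \E\left[\int_0^T|\dot l_s|^2\,ds\right]^{1/2}\leq \frac{\tilde C_2}{\sqrt{1-e^{-\tilde C_2^2 T}}}.
\end{equation}

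Next I would record the pointwise bounds on the transport processes. Since $DW_s=-\tfrac12{\Ric}^\sharp_Z W_s$ and both $k$ and $l$ vanish once $x_s$ leaves $D_2$, on the relevant time interval one has $|W_s|\leq e^{\frac12(\kappa_{D_2}\vee 0)T}$ and $|W_s^{-1}|\leq e^{\frac12\kappa_{D_2}T}$; from the explicit formula for $W'_s(\cdot,w)$ one gets, using the curvature bounds $\rho_{D_1}$ and $\rho'_{D_1}$ and a Burkholder-Davis-Gundy estimate for the martingale part, a bound of the form $\E[\sup_{s\leq T}|W'_s(\cdot,w)|^2]^{1/2}\leq C(n,T,\kappa_{D_2},\rho_{D_1},\rho'_{D_1})|w|$. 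Then I would run through the four groups of terms in the formula of Theorem~\ref{thm:locformtwo}: the term with $\int_0^T\langle W'_s(\dot k_s v,w),//_sdB_s\rangle$ is handled by Itô isometry and the $W'$-bound, giving a factor $\E[\int_0^T|\dot k_s|^2ds]^{1/2}$; the two potential terms involving $\nabla dV$ and $dV$ against $W'$ use $v'_{D_1}$, $v_{D_2}$ and again the bounds on $W$, $W'$, $k$; and the final product term is bounded by Cauchy-Schwarz applied twice, yielding a product of an $l$-factor (with $\E[\int|\dot l_s|^2]^{1/2}+v_{D_2}\E[\int|l_s|^2]^{1/2}$) and a $k$-factor (with $\E[\int|\dot k_s|^2]^{1/2}+v_{D_2}\E[\int|k_s|^2]^{1/2}$). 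Collecting the exponential prefactors $e^{-\inf V\,T}$, $e^{\frac12(\kappa_{D_2}\vee0)T}$ and so on, and substituting the bounds on $k,\dot k,l,\dot l$, each term is $O(1/t)$ uniformly in $x_0\in D_0$ and $|f|_\infty\leq 1$, which gives the claimed estimate.

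The main obstacle I anticipate is obtaining the uniform $L^2$-bound on $W'_s(\cdot,w)$ with a constant that depends only on the stated geometric quantities. The process $W'$ solves a covariant Itô equation whose inhomogeneous term contains both a curvature-driven martingale part $R(//_s dB_s,W_s\cdot)W_s w$ and the drift involving $(\nabla{\Ric}^\sharp_Z+d^\star R)$, and $W'$ itself is damped by $-\tfrac12{\Ric}^\sharp_Z$; one must localize carefully so that all these quantities are evaluated inside $D_1$ (where $\rho_{D_1}$, $\rho'_{D_1}$ apply) — this is exactly why $k$ is required to vanish outside $D_1$ — and then close a Gronwall-type inequality for $\E[\sup_{s\leq t\wedge\tau}|W'_s|^2]$. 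A secondary technical point is the stochastic-Fubini manipulation inherited from the proof of Theorem~\ref{thm:locformtwo}, but since that formula is already established I may quote it directly; the estimation itself is then routine once the transport bounds are in hand.
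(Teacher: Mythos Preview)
Your overall strategy---apply Theorem~\ref{thm:locformtwo}, choose $k$ and $l$ as in \cite{APT,ThalmaierWanggradest}, then estimate term by term via Cauchy--Schwarz---is exactly the route the paper takes. The treatment of the $l$-terms and of the potential terms involving $\nabla dV$ and $dV$ is fine and matches the paper.

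The gap is in your handling of the first term, $\int_0^T\langle W'_s(\dot k_s v,w),//_s\,dB_s\rangle$. You propose to bound $\E[\sup_{s\le T}|W'_s(\cdot,w)|^2]^{1/2}$ by BDG/Gronwall and then say ``It\^o isometry and the $W'$-bound'' give a factor $\E[\int_0^T|\dot k_s|^2\,ds]^{1/2}$. But It\^o isometry yields
\[
\E\!\left[\int_0^T |W'_s(\dot k_s v,w)|^2\,ds\right]
\;\le\;\E\!\left[\sup_{s\le T}|W'_s(\cdot,w)|^2\int_0^T|\dot k_s|^2\,ds\right],
\]
and since $W'_s$ contains a genuine martingale part (the $R(//_r\,dB_r,\cdot)$ integral) it is \emph{not} pathwise bounded; to separate the two random factors you would need Cauchy--Schwarz again, hence control of $\E[\sup_s|W'_s|^4]$ and of $\E\big[(\int_0^T|\dot k_s|^2\,ds)^2\big]$, neither of which you establish. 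Your anticipated Gronwall argument gives only the second moment of $\sup_s|W'_s|$, and the standard Thalmaier--Wang construction of $k$ is quoted only at the level of $\E[\int|\dot k|^2]$.

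The paper avoids this decoupling altogether. Rather than bounding $W'$ on its own, it imposes an \emph{additional} requirement on the choice of $k$ (beyond those already used for $l$ and $k$ in Theorem~\ref{thm:locestonehalf}), namely that the two pieces of $W'_s(\dot k_s v,w)$ satisfy directly
\[
\E\!\left[\Big(\int_0^T\big\langle \dot k_s\,W_s\!\int_0^s W_r^{-1}R(//_r\,dB_r,W_r)W_r,\;//_s\,dB_s\big\rangle\Big)^2\right]^{1/2}\le \frac{\tilde C_3\,e^{\kappa_{D_1}T}}{\sqrt{1-e^{-\tilde C_3^2 T}}},
\]
with an analogous bound for the $(\nabla{\Ric}^\sharp_Z+d^\star R)$ part, and refers to \cite[Section~4.2]{Plank} for the construction of such a $k$. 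This is the key additional ingredient you are missing: the process $k$ is tailored to absorb the stochastic integral inside $W'$, so that one never needs a separate moment bound on $W'$ itself. Once these two inequalities are in hand, the rest of your Cauchy--Schwarz bookkeeping goes through as written.
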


\begin{proof}
Recalling the defining equation for $W'_s(v,w)$ and choosing the processes $k_t$ and $l_t$ as in the proof of Theorem \ref{thm:locestonehalf}, it follows for the process $k_t$ that
\begin{equation}
\begin{split}
\E \left[ \left(\int_0^T\langle  \dot{k}_s W_s \int_0^s W_r^{-1} R(//_r dB_r,W_r)W_r , //_s dB_s\rangle\right)^2\right]^\frac{1}{2} &\text{ }\leq \frac{\tilde{C}_3 e^{\kappa_{D_1} T}}{\sqrt{1-e^{-\tilde{C}_3^2 T}}},\\
\E \left[ \left(\int_0^T\langle  \dot{k}_s W_s \int_0^s W_r^{-1}(\nabla {\Ric}^\sharp_Z + d^\star R)(W_r ,W_r)dr, //_s dB_s\rangle\right)^2\right]^\frac{1}{2}&\text{ }\leq \frac{\tilde{C}_4e^{\kappa_{D_1} T}}{\sqrt{1-e^{-\tilde{C}_4^2 T}}}
\end{split}
\end{equation}
for positive constants $\tilde{C}_3$ and $\tilde{C}_4$ which depend continuously on $\kappa_{D_1}$, $\rho_{D_1}$, $\rho'_{D_1}$, $n$ and on $d(\partial D_0,\partial D_1)$ and $d(\partial D_0,\partial D_2)$, respectively. The details of this, including explicit bounds on these constants (and on those appearing in Theorems \ref{thm:locestone} and \ref{thm:locestonehalf}) are found in \cite[Section~4.2]{Plank}, with appropriate bounds for the radial part of the diffusion being given as in the proof of \cite[Corollary~2.1.2]{Wangbook}. By Theorem \ref{thm:locformtwo} and the Cauchy-Schwarz inequality we have
\begin{equation}
\begin{split}
&|\nabla d P^V_Tf| \\
\leq\text{ }& |f|_\infty e^{T(\kappa_{D_2}-\inf V)}\left(\frac{\tilde{C}_3}{\sqrt{1-e^{-\tilde{C}_3^2 T}}} + \frac{1}{2}\frac{\tilde{C}_4}{\sqrt{1-e^{-\tilde{C}_4^2 T}}}\right)\\
&+|f|_\infty e^{T(\kappa_{D_2}-\inf V)}\left(v'_{D_1} \E\left[ \left(\int_0^T |k_s|ds\right)^2\right]^{\frac{1}{2}}+v_{D_2} c_1^2 (\rho_{D_1} \vee \frac{1}{2}\rho'_{D_1}) \frac{T^2}{\sqrt{2}}\right)\\
&+|f|_\infty e^{T(\kappa_{D_2}-\inf V)} \left( \E\left[ \int_{0}^T  |\dot{l}_s|^2 ds\right]^\frac{1}{2} + v_{D_2}\E\left[\left(\int_0^{T} |l_s|ds\right)^2 \right]^\frac{1}{2}\right)\\
&\quad\quad\quad\quad\quad\quad\quad\quad\quad\quad\quad\quad\quad\quad\quad\cdot\left( \E\left[\int_0^{T} |\dot{k}_s|^2ds\right]^\frac{1}{2}+v_{D_2}\E\left[\left(\int_0^{T} |k_s|ds\right)^2\right]^\frac{1}{2}\right)
\end{split}
\end{equation}
so the result follows by substituting the bounds on $k,\dot{k},l$ and $\dot{l}$.
\end{proof}

\begin{corollary}\label{cor:globesttwo}
Suppose $|{\Ric}_Z|$, $|dV|$, $|\nabla d V|$, $|\nabla{\Ric}^\sharp_Z+d^\star R-2R(Z)|$, $|R|$ are bounded with $V$ bounded below. Then there exists a positive constant $C\equiv C(n,T)$ such that
\begin{equation}
|\nabla d P^V_tf_{x}| \leq \frac{C}{t}|f|_\infty
\end{equation}
for all $0 < t \leq T$, $x \in M$ and $f \in \mathcal{B}_b$.
\end{corollary}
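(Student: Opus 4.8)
The plan is to deduce Corollary~\ref{cor:globesttwo} directly from Theorem~\ref{thm:locesttwo}, exactly as Corollaries~\ref{cor:globestone} and \ref{cor:globesthalf} were deduced from Theorems~\ref{thm:locestone} and \ref{thm:locestonehalf}. The point is that under the stated \emph{global} boundedness hypotheses on $|{\Ric}_Z|$, $|dV|$, $|\nabla d V|$, $|\nabla{\Ric}^\sharp_Z+d^\star R|$ and $|R|$, the geometric quantities $\kappa_{D_2}$, $v_{D_2}$, $v'_{D_1}$, $\rho_{D_1}$ and $\rho'_{D_1}$ occurring in Theorem~\ref{thm:locesttwo} are all bounded, uniformly in the choice of the domains $D_0 \subset D_1 \subset D_2$, by the corresponding global suprema. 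So the only remaining source of dependence of the constant $C$ in Theorem~\ref{thm:locesttwo} on the domains is through the distances $d(\partial D_0,\partial D_1)$ and $d(\partial D_1,\partial D_2)$, which enter via the constants $\tilde C_1,\tilde C_2,\tilde C_3,\tilde C_4$ and $c_1,c_2$ used to control $k,\dot k,l,\dot l$.

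First I would fix, for each $x \in M$, a triple of regular domains $D_0 \subset D_1 \subset D_2$ with $x \in \overline{D_0}$ and with the two gap distances bounded below by a fixed positive number independent of $x$. As in the proof of Corollary~\ref{cor:globestone}, there are two cases. If $M$ is compact, then $\inj(M)>0$ and one takes $D_0 = B_{\inj(M)/8}(x)$, $D_1 = B_{\inj(M)/4}(x)$, $D_2 = B_{\inj(M)/2}(x)$ (shrinking the radii slightly and smoothing if one insists on smooth boundaries), so that $d(\partial D_0,\partial D_1) = d(\partial D_1,\partial D_2) = \inj(M)/8$; moreover on a compact manifold all the curvature and potential quantities are automatically bounded, so there is nothing to assume. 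If $M$ is non-compact, then for each $x$ one can find regular domains with $x \in \overline{D_0} \subset D_1$, $\overline{D_1} \subset D_2$ and $d(\partial D_0,\partial D_1) = d(\partial D_1,\partial D_2) = 1$, say; here the global boundedness hypotheses are exactly what is needed to bound $\kappa_{D_2}$, $v_{D_2}$, $v'_{D_1}$, $\rho_{D_1}$, $\rho'_{D_1}$ by constants not depending on $x$.

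With such a choice made, apply Theorem~\ref{thm:locesttwo} at $x_0 = x$. The constant $C \equiv C(n,T,\inf V,\kappa_{D_2},v_{D_2},v'_{D_1},\rho_{D_1},\rho'_{D_1})$ produced there depends continuously on its arguments, and each of those arguments is now bounded by a quantity depending only on $n$, $T$, $\inf V$ and the fixed global bounds; likewise the auxiliary constants $\tilde C_1,\ldots,\tilde C_4,c_1,c_2$ depend continuously on $n$, the curvature bounds, $T$ and on the gap distances, which are now the fixed constant $1$ (or $\inj(M)/8$ in the compact case). Hence $C$ can be replaced by a single constant $C \equiv C(n,T)$ valid for every $x \in M$, and the estimate $|\nabla d P^V_t f_{x}| \le \frac{C}{t}|f|_\infty$ for all $0 < t \le T$, $x \in M$, $f \in \mathcal{B}_b$ follows.

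The only mild subtlety — and the step I would be most careful about — is checking that the non-compact case really does admit, uniformly in $x$, a triple of \emph{regular} domains (open, connected, relatively compact, smooth boundary) with the two prescribed gap distances; this is a standard construction (one can take suitable sublevel sets of a smoothed distance function, or unions of small metric balls along a short path, then smooth the boundary), so I would simply invoke it as in Corollary~\ref{cor:globestone} rather than belabour it. Everything else is bookkeeping: tracking that no hidden dependence on the domains survives once the curvature/potential terms are globally bounded and the gaps are fixed.
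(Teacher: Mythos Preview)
Your proposal is correct and matches the paper's own proof, which is a one-line reference to Theorem~\ref{thm:locesttwo} together with the observation that the domain dependence can be eliminated exactly as in Corollaries~\ref{cor:globestone} and \ref{cor:globesthalf}. One minor bookkeeping point: in the paper the relevant gap distances entering the constants are $d(\partial D_0,\partial D_1)$ and $d(\partial D_0,\partial D_2)$ (not $d(\partial D_1,\partial D_2)$), but your nested-ball construction fixes both of these uniformly, so the argument goes through unchanged.
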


\begin{proof}
The result follows from Theorem \ref{thm:locesttwo}, since as in Corollaries \ref{cor:globestone} and \ref{cor:globesthalf} any dependence of the constant on $D_0,D_1$ and $D_2$ can be eliminated.
\end{proof}

\section{Non-local Formulae}\label{sec:globform}

If ${\Ric}_Z$ is bounded below then, by \cite[Corollary~2.1.2]{Wangbook}, the diffusion $x_t$ is non-explosive, which is to say $\zeta(x_0)=\infty$, almost surely. While the formulae in this section require non-explosion and global bounds on the various curvature operators, they are expressed in terms of explicit and deterministic processes $k$ and $l$.

\begin{theorem}\label{thm:nonlocformone}
Suppose $x_0\in M$ with $v \in T_{x_0}M$, $f \in \mathcal{B}_b$, $V$ bounded below and $T>0$. Set
\begin{equation}
k_s = \frac{T-s}{T}.
\end{equation}
Suppose ${\Ric}_Z$ is bounded below with $|dV|$ bounded and $V$ bounded below. Then
\begin{equation}
(dP^V_Tf)(v)  = -\mathbb{E}\left[ \V_T f(x_T)\int_0^{T} \langle W_s (\dot{k}_sv),//_sdB_s\rangle + dV_{T-s}(W_s(k_s v))ds\right].
\end{equation}
\end{theorem}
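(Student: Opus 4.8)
The plan is to derive Theorem \ref{thm:nonlocformone} as a limiting case of the local formula in Theorem \ref{thm:locformone}. The point is that the deterministic process $k_s = (T-s)/T$ satisfies $k_0 = 1$ and $k_T = 0$, but it does \emph{not} vanish before time $T$, so it is not directly admissible in Theorem \ref{thm:locformone}; however, it can be recovered as a limit of admissible processes. Under the standing hypotheses --- ${\Ric}_Z$ bounded below, $|dV|$ bounded, $V$ bounded below --- the diffusion is non-explosive by \cite[Corollary~2.1.2]{Wangbook}, so $\mathbf{1}_{\lbrace T < \zeta(x_0)\rbrace} = 1$ almost surely, and by Corollary \ref{cor:globestone} the gradient $dP^V_tf$ is bounded on $[\epsilon,T]\times M$.

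First I would exhaust $M$ by an increasing sequence of regular domains $D^{(j)}$ with $x_0 \in D^{(j)}$ and $\bigcup_j D^{(j)} = M$, writing $\tau_j$ for the first exit time of $x_t$ from $D^{(j)}$; since $x_t$ is non-explosive, $\tau_j \uparrow \infty$ almost surely. For each $j$ I would choose an admissible process, e.g. $k^{(j)}_s := k_s$ for $s \le \tau_j \wedge (T - 1/j)$ and cut off to zero on $[\tau_j \wedge (T-1/j), \tau_j \wedge T]$, exactly as in the proof of Theorem \ref{thm:locformone}. Applying Theorem \ref{thm:locformone} with $D = D^{(j)}$ and $k = k^{(j)}$ gives
\begin{equation}
(dP^V_Tf)(v) = -\mathbb{E}\left[ \V_T f(x_T)\mathbf{1}_{\lbrace T < \zeta(x_0)\rbrace}\int_0^T \langle W_s(\dot{k}^{(j)}_s v),//_s dB_s\rangle + dV_{T-s}(W_s(k^{(j)}_s v))\,ds\right].
\end{equation}
The left-hand side does not depend on $j$, so it suffices to pass to the limit $j \to \infty$ on the right.

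The main work is the convergence argument. On the event $\lbrace \tau_j \ge T \rbrace$, which has probability tending to $1$, the cutoff only affects the interval $[T-1/j, T]$, so $k^{(j)}_s \to k_s$ and $\dot{k}^{(j)}_s \to \dot{k}_s = -1/T$ in an appropriate sense; the extra contribution from the cutoff region is controlled using the bound on $|dP^V_tf|$ near time $T$ together with $|\V_t| \le e^{-T\inf V}$, the boundedness of $|dV|$, and the uniform estimate $\mathbb{E}|W_s|^2 \le e^{-(\kappa \wedge 0)s}$ coming from ${\Ric}_Z$ bounded below. I would split the Itô integral into the part over $[0, T-1/j]$, which converges by Itô isometry since $\dot{k}^{(j)} = -1/T$ there, and the part over $[T-1/j, T]$, which is handled by first conditioning at time $T-1/j$ and using the strong Markov property and the bound on $dP^V_{1/j}(\cdot)$ --- this is exactly the $\epsilon \downarrow 0$ step in Theorem \ref{thm:locformone} but now performed with a deterministic rather than stopped process. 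The Lebesgue-type term $\int_0^T dV_{T-s}(W_s(k^{(j)}_s v))\,ds$ converges by dominated convergence, with dominating function $|dV|_\infty \sup_{s\le T}|k_s|\int_0^T |W_s|\,ds \in L^1$. The one subtlety to check carefully is uniform integrability of $\V_T f(x_T)\int_0^T \langle W_s(\dot{k}^{(j)}_s v),//_s dB_s\rangle$ across $j$, for which the second-moment bound $\mathbb{E}\big[(\int_0^T \langle W_s(\dot k^{(j)}_s v),//_s dB_s\rangle)^2\big] = \mathbb{E}\int_0^T |\dot k^{(j)}_s|^2 |W_s|^2\,ds$ suffices, being bounded uniformly in $j$ by $C\int_0^T |\dot k_s|^2 ds < \infty$ plus a vanishing cutoff contribution. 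Collecting the limits yields the stated formula.
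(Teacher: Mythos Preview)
Your strategy is sound and can be made to work, but it is more circuitous than the paper's argument. The paper does not invoke Theorem~\ref{thm:locformone} on an exhausting sequence of domains. Instead it returns directly to the local martingale \eqref{eq:locmartingone} of Lemma~\ref{lem:locmartone}, inserts the deterministic process $k^\epsilon_s = \frac{T-\epsilon-s}{T-\epsilon}\vee 0$, and observes that under the global hypotheses --- non-explosion, $|W_s|\le e^{-\frac12(\underline\kappa\wedge 0)s}$ from ${\Ric}_Z\ge\underline\kappa$, $|dV|$ and $\V_t$ bounded, and crucially $|dP^V_t f|$ bounded on $[\epsilon,T]\times M$ by Corollary~\ref{cor:globestone} --- every term of the local martingale is bounded, so it is a true martingale on $[0,T]$. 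Equating expectations at $t=0$ and $t=T$ gives the formula with $k^\epsilon$ in place of $k$; dominated convergence as $\epsilon\downarrow 0$ recovers $k$. No domain-dependent random $k^{(j)}$ and no separate uniform-integrability argument are needed.

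Your route through Theorem~\ref{thm:locformone} can also be carried out, but one detail in your construction needs repair: on the event $\{\tau_j < T-1/j\}$ the cutoff interval $[\tau_j\wedge(T-1/j),\,\tau_j\wedge T]$ degenerates to a single point, so $k^{(j)}$ as described would have to jump and would fail to lie in $L^{1,2}$. You would need an auxiliary inner stopping time (for instance the exit from $D^{(j-1)}$) to guarantee a nondegenerate cutoff window and then control $\dot k^{(j)}$ in $L^2$ there --- precisely the bookkeeping that the paper's direct martingale argument sidesteps.
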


\begin{proof}
It follows from Corollary \ref{cor:globestone} that $|dP^V_t|$ is bounded on $[\epsilon,T] \times M$. Therefore, using
\begin{equation}
k^\epsilon_s = \frac{T-\epsilon-s}{T-\epsilon} \vee 0 
\end{equation}
the local martingale \eqref{eq:locmartingone} is a true martingale. Taking expectations and eliminating $\epsilon$ with dominated convergence yields the above formula.
\end{proof}

Theorem \ref{thm:nonlocformone} is precisely \cite[Theorem~5.2]{ElworthyLi}, which was also obtained in \cite{LiThompson} by a slightly different method.

\begin{theorem}\label{thm:nonlocformhalf}
Suppose $x_0 \in M$, $f \in \mathcal{B}_b$, $V$ bounded below and $T>0$. Set
\begin{equation}
k_s = \frac{T-2s}{T}\vee 0,\quad l_s = 1 \wedge \frac{2(T-s)}{T}.
\end{equation}
Suppose $|{\Ric}_Z|$, $|dV|$ and $|Z|$ are bounded with $V$ bounded below. Then
\begin{equation}
\begin{split}
&L(P^V_Tf)(x_0) \\
=\text{ }&\E\left[ \V_T f(x_T)\int_0^T \langle \dot{k}_s Z,//_s dB_s\rangle \right]\\
& +\frac{1}{2}\E\left[ \V_T f(x_T)\left( \int_0^T \<  W_s \dot{l}_s ,//_s dB_s\> + dV_{T-s}(W_s l_s)ds\right)\int_0^T \dot{k}_s W_s^{-1} //_s dB_s\right].
\end{split}
\end{equation}
\end{theorem}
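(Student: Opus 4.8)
The plan is to argue as in the proof of Theorem~\ref{thm:nonlocformone}, now starting from the local martingale of Lemma~\ref{lem:locmartonehalf} rather than that of Lemma~\ref{lem:locmartone}. Since $|{\Ric}_Z|$ is bounded, ${\Ric}_Z$ is in particular bounded below, so by \cite[Corollary~2.1.2]{Wangbook} the diffusion is non-explosive; thus $\zeta(x_0)=\infty$ almost surely, the indicator $\mathbf 1_{\{T<\zeta(x_0)\}}$ is superfluous, and one may fix a sequence of regular domains exhausting $M$ whose exit times increase to $\infty$. By Corollaries~\ref{cor:globestone} and \ref{cor:globesthalf}, both $dP^V_tf$ and $LP^V_tf$ are bounded on $[\epsilon,T]\times M$ for each $\epsilon\in(0,T)$.

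First I would check that, for the given deterministic $k$ (which is supported in $[0,T/2]$) and for a truncation $l^\epsilon$ of $l$ equal to $1$ on $[0,T/2]$ and to $0$ on $[T-\epsilon,T]$, with linear interpolation in between, the process of Lemma~\ref{lem:locmartonehalf} --- with $T/2$ in place of $S$ and the exit times sent to $\infty$ --- is a local martingale on $[0,T)$. The It\^o, Weitzenb\"ock and integration-by-parts manipulations in the proof of that lemma are global and go through verbatim; the only genuinely domain-dependent step there is the assertion that $O^1_tO^2_t$ is a local martingale, and this is now supplied instead by the disjointness of the supports of $\dot k$ and $\dot l^\epsilon$: since $l^\epsilon-1$ and $\dot l^\epsilon$ vanish on $[0,T/2]$ we have $O^1\equiv 0$ there, while $\dot k$ vanishes on $[T/2,T)$ so that $O^2$ is constant on that interval, whence $O^1_tO^2_t=O^2_{T/2}(O^1_t-O^1_{T/2})$ on $[T/2,T)$ is a local martingale. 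Passing to the limit along the exhausting domains and using non-explosion gives the local martingale property on all of $[0,T)$.

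Next I would upgrade this to a true martingale on $[0,T]$, using the estimates of the first paragraph together with the boundedness of $\V_t$, $P^V_tf$, $W_t$, $W_t^{-1}$, $Z$ and $dV$ and the $L^2$-boundedness of the stochastic integrals that occur (their integrands involve the bounded processes $\dot k$, $\dot l^\epsilon$, $W_t$, $W_t^{-1}$ and $Z$): each term is then dominated uniformly in $t\in[0,T]$ by an integrable random variable. It is essential here that $k$ is supported away from $T$ and that $l^\epsilon$ vanishes near $T$, which tames the blow-up of $df_t$ and $Lf_t$ as $t\uparrow T$. Taking expectations and then proceeding exactly as in the proof of Theorem~\ref{thm:formhalf} --- rewriting $\int_0^T k_s\big(dV_{T-s}(//_sdB_s)+LV_{T-s}\,ds\big)$ by It\^o's formula applied to $s\mapsto k_sV_{T-s}(x_s)$ as $-V_T(x_0)+\int_0^T\big(k_s\dot V_{T-s}(x_s)-\dot k_s V_{T-s}(x_s)\big)ds$ and cancelling the resulting potential terms against Lemma~\ref{lem:potlemma} applied to this $k$ --- gives the asserted identity with $l^\epsilon$ in place of $l$. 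Finally one lets $\epsilon\downarrow 0$: by dominated convergence, and since $l^\epsilon\to l$ and $\dot l^\epsilon\to\dot l$ in the appropriate senses, the formula in the statement follows.

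I expect the main obstacle to be the step from local to true martingale, and in particular the treatment of the term involving $LV_{T-s}=\tfrac12\Delta V_{T-s}+dV_{T-s}(Z)$: since only $|dV|$, and not $|\nabla dV|$ or $\Delta V$, is assumed bounded, this term cannot be controlled on its own and must be kept inside the It\^o combination $\int_0^T k_s(dV_{T-s}(//_sdB_s)+LV_{T-s}\,ds)$, which Lemma~\ref{lem:potlemma} renders harmless. A secondary point requiring care is the justification of the interchange of the limit $\epsilon\downarrow 0$ with the expectation, together with the fact that the deterministic $k$ and $l$ do not themselves satisfy the hypotheses of Lemma~\ref{lem:locmartonehalf}, so that its proof --- not merely its statement --- has to be invoked.
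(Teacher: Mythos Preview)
Your proposal follows essentially the same route as the paper: invoke the global bounds from Corollaries~\ref{cor:globestone} and~\ref{cor:globesthalf} to ensure that, with the deterministic $k$ and the truncated $l^\epsilon$, the local martingale of Lemma~\ref{lem:locmartonehalf} is a true martingale on $[0,T]$; take expectations; simplify via It\^o and Lemma~\ref{lem:potlemma} exactly as in the proof of Theorem~\ref{thm:formhalf}; then let $\epsilon\downarrow 0$ by dominated convergence. The paper's own proof is more terse (it cites only Corollary~\ref{cor:globesthalf}, whereas you correctly note that the bound on $dP^V_tf$ from Corollary~\ref{cor:globestone} is also needed for the term $\V_t\,df_t(W_t l_t\int_0^t\dot k_sW_s^{-1}//_sdB_s)$), and it gives an explicit formula for $l^\epsilon$ but otherwise proceeds identically.

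Your identification of the $LV_{T-s}$ term as the delicate point is apt and goes beyond what the paper spells out. One remark: in the local martingale of Lemma~\ref{lem:locmartonehalf} the problematic term appears as $-\int_0^t \V_s f_s(x_s)\,k_s\,LV_{T-s}\,ds$, with the factor $\V_s f_s(x_s)$ inside the integral, so it is not literally of the form $\int_0^t k_s(dV_{T-s}(//_sdB_s)+LV_{T-s}\,ds)$ that the It\^o identity handles directly. The cleanest way to close this is to first use that $\V_t f_t(x_t)\int_0^t k_s LV_{T-s}\,ds-\int_0^t \V_s f_s(x_s)\,k_s LV_{T-s}\,ds$ is itself a local martingale (integration by parts against the martingale $\V_t f_t(x_t)$), thereby replacing the inner $\V_s f_s$ by the outer $\V_t f_t$, and only then apply the pathwise It\^o identity for $k_s V_{T-s}(x_s)$. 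The paper does not make this explicit either, so your proposal is at least as complete as the published argument.
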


\begin{proof}
It follows from Corollary \ref{cor:globesthalf} that $|L P^V_t|$ is bounded on $[\epsilon,T] \times M$. Therefore, using $k_s$ and
\begin{equation}
l^\epsilon_s = \left(1 \wedge \frac{T-\epsilon-s}{\frac{T}{2}-\epsilon}\right)\vee 0
\end{equation}
the local martingale appearing in Lemma \ref{lem:locmartonehalf} is a true martingale. Taking expectations, using Lemma \ref{lem:potlemma} as in the proof of Theorem \ref{thm:formhalf} and eliminating $\epsilon$ with dominated convergence yields the above formula.
\end{proof}

\begin{theorem}\label{thm:nonlocformtwo}
Suppose $x_0\in M$ with $v,w \in T_{x_0}M$, $f \in \mathcal{B}_b$, $V$ bounded below and $T>0$. Define $k_s$ and $l_s$ as in Theorem \ref{thm:nonlocformhalf}. Suppose $|{\Ric}_Z|$, $|dV|$, $|\nabla d V|$, $|\nabla{\Ric}^\sharp_Z+d^\star R-2R(Z)|$ and $|R|$ are bounded with $V$ bounded below. Then
\begin{equation}
\begin{split}
&(\nabla d P^V_Tf)(v,w) \\
=\text{ }&  - \E \left[ \V_T f(x_T)\int_0^T\langle  W^{\prime}_s(\dot{k}_sv,w), //_s dB_s\rangle\right]\\
&-\E\left[ \V_T f(x_T)\int_0^T ((\nabla d V_{T-s})(W_s(k_sv),W_s(w))+(dV_{T-s})(W^{\prime}_s(k_sv,w)))ds\right]\\
&+\E\left[ \V_T f(x_T)\left(\int_{0}^T \< W_s( \dot{l}_sw),//_s dB_s\> + dV_{T-s}( W_s(l_sw)) ds\right)\right.\\
&\quad\quad\quad\quad\quad\quad\quad\quad\quad\quad\quad\quad\quad\quad\quad\left.\cdot\left(\int_0^{T} \<W_s (\dot{k}_sv),//_sdB_s\>+ dV_{T-s}(W_s(k_sv))ds\right)\right].
\end{split}
\end{equation}
\end{theorem}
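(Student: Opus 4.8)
The plan is to follow the template of Theorems \ref{thm:nonlocformone} and \ref{thm:nonlocformhalf}: I would run the argument behind Theorem \ref{thm:locformtwo}, but replace the compact-domain localisation used there by the global estimates of Section \ref{sec:localest}. Since ${\Ric}_Z$ is bounded below the diffusion is non-explosive, so $\zeta(x_0)=\infty$ almost surely and the indicator $\mathbf{1}_{\{T<\zeta(x_0)\}}$ plays no role; and by Corollaries \ref{cor:globestone} and \ref{cor:globesttwo} the quantities $|dP^V_tf|$ and $|\nabla d P^V_tf|$ are bounded on $[\epsilon,T]\times M$ for every $\epsilon>0$. With the deterministic $k$ of the statement and the truncation $l^\epsilon_s=\bigl(1\wedge\tfrac{T-\epsilon-s}{T/2-\epsilon}\bigr)\vee 0$ from the proof of Theorem \ref{thm:nonlocformhalf}, I would first note that the process displayed in Lemma \ref{lem:locmarttwo} remains a local martingale on $[0,T)$: its proof applies equally to these deterministic processes, the common switch time $T/2$ (where $\dot k$ vanishes and $l^\epsilon$ leaves the value $1$) playing the role of $\sigma\wedge S$, so that the product-of-local-martingales step goes through as before.

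Next I would argue that this local martingale is in fact a true martingale on $[0,T]$. For this I would estimate its terms one at a time: $\V_t$, $f$, $dV_{T-t}$ and $\nabla d V_{T-t}$ are bounded by hypothesis; $W_t$ and $W^{\prime}_t$ have all moments finite, with bounds depending only on the curvature quantities, from their defining covariant equations and the boundedness of ${\Ric}^\sharp_Z$, $R$ and $\nabla {\Ric}^\sharp_Z+d^\star R$; the processes $k,l^\epsilon$ and their derivatives obey the $L^2$ bounds used in Section \ref{sec:localest}; and wherever $\nabla d f_t$ or $df_t$ appears it is multiplied by $k_t$ or $l^\epsilon_t$, which vanish for $t$ near $T$ where the estimates $C/t$ and $C/\sqrt t$ blow up, or else is integrated in $ds$ against a factor comparable to $1/\sqrt{T-s}$, which is integrable. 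Hölder's inequality then yields a uniformly integrable family, hence a true martingale. Evaluating it at $t=0$ gives $(\nabla d P^V_Tf)(v,w)$, so taking expectations produces the identity with $l^\epsilon$ in place of $l$.

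It then remains to carry out exactly the bookkeeping at the end of the proof of Theorem \ref{thm:locformtwo}: apply Theorem \ref{thm:nonlocformone} --- now legitimate globally, the conditioning being justified by non-explosion and the global gradient bound --- to rewrite the terms $\E\bigl[\int_0^T\V_sdf_s(W_s(w))dV_{T-s}(W_s(k_sv))ds\bigr]$ and $\E\bigl[\int_0^T\V_sdf_s(W_s(k_sv))dV_{T-s}(W_s(w))ds\bigr]$, and then use the stochastic Fubini theorem \cite[Theorem~2.2]{Veraar} to cancel the leftover cross terms involving $\int_0^s dV_{T-r}(W_r(w))dr$. Letting $\epsilon\downarrow 0$ by dominated convergence turns $l^\epsilon$ back into $l$ and gives the stated formula. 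I expect the main obstacle to be the true-martingale step: with no compact domain to localise on, one has to establish the requisite integrability directly, absorbing the $t^{-1}$ and $t^{-1/2}$ singularities of the Hessian and gradient estimates into the vanishing of $k$ and $l^\epsilon$ near $t=T$ and into the integrability of the residual time integrals, while keeping control of the moments of $W_t$ and $W^{\prime}_t$.
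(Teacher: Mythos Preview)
Your proposal is correct and follows essentially the same route as the paper: use the global bounds from Corollaries \ref{cor:globestone} and \ref{cor:globesttwo} together with the deterministic $k$ and the truncated $l^\epsilon$ to upgrade the local martingale of Lemma \ref{lem:locmarttwo} to a true martingale, take expectations, repeat the bookkeeping from the proof of Theorem \ref{thm:locformtwo}, and pass $\epsilon\downarrow 0$ by dominated convergence. You have in fact spelled out the true-martingale verification more carefully than the paper does, which simply asserts it.
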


\begin{proof}
It follows from Corollary \ref{cor:globesttwo} that $|d P^V_t|$ and $|\nabla d P^V_t|$ are bounded on $[\epsilon,T] \times M$. Therefore, using $l^\epsilon_s$ defined as in the proof of Theorem \ref{thm:nonlocformhalf}, the local martingale appearing in Lemma \ref{lem:locmarttwo} is a true martingale. Taking expectations, proceeding as in the proof of Theorem \ref{thm:locformtwo} and eliminating $\epsilon$ with dominated convergence yields the above formula.
\end{proof}

For the case $V=0$, Theorem \ref{thm:nonlocformtwo} gives the filtered version of the second part of \cite[Theorem~3.1]{ElworthyLi}, which was proved by differentiating under the expectation for $f \in BC^2$ and which, as observed in \cite{Plank}, contains a slight error, permuting the vectors $v$ and $w$.

\begin{remark}
Our gradient and Hessian formulae require $V\in C^1$ and $V \in C^2$, respectively (see Remark \ref{rem:notsmooth}). More generally, it is desirable to consider possibly very singular potentials, such as those which appear in many quantum mechanical problems. See, for example, \cite{Guneysu} and \cite{PriolaWang}. It was pointed out to the authors of \cite{ElworthyLi} by G. Da Prato, and to the author of this article by X.-M. Li, that non-smooth potentials $V$ can be dealt with using the variation of constants formula:
\begin{equation}\label{eq:vocform}
P^V_Tf = P_Tf - \int_0^T P_{T-s}(V_s P^V_s f)ds
\end{equation}
where $P_T$ denotes the minimal semigroup associated to the operator $L$. So long as $P^V_Tf$ is sufficiently regular, formulae and estimates $dP^V_Tf$ can be obtained from formulae and estimates for $dP_Tf$, simply by differentiating the above formula. In particular, this approach results in gradient estimates depending only on $\| V\|_\infty$ (like those in \cite{PriolaWang} for domains in $\mathbb{R}^n$). Our gradient estimate, Theorem \ref{thm:locestone}, on the other hand, does not require that $V$ is bounded (only bounded below). For the second derivatives one must take care in passing the derivatives through the integral in formula \eqref{eq:vocform}. For the case in which the potential is a bounded H\"{o}lder continuous function $V$ which does not depend on time, this can be achieved at each point $x_0 \in M$ by shifting $V$ to $V(x_0)=0$. The details of this for, the Hessian, are given in \cite{Lihess}, where the approach taken in based on that of \cite{ElworthyLi}.
\end{remark}

\section{Kernel Estimates}\label{sec:kerests}

Now suppose $Z = \nabla h$, for some $h \in C^2$, and consider the $m$-dimensional Bakry-Emery curvature tensor
\begin{equation}
{\Ric}_{m,n} := {\Ric}^\sharp - \nabla d h - \frac{\nabla h \otimes \nabla h}{m-n}
\end{equation}
where $m \geq n$ is a constant (see \cite{Lott}). Denoting by $p^h_t(x,y)$ the density of the diffusion with generator $L$, with respect to the weighted Riemannian measure $e^{h}dy$, it follows, as explained in the proof of \cite[Theorem~1.4]{XDLi}, that if $\Ric_{m,n} \geq -\kappa$ for some $\kappa \geq 0$ then there exists a positive constant $C\equiv C(\kappa,m,T)$ such that
\begin{equation}
\log \left( \frac{p^h_{\frac{t}{2}}(x,z)}{p^h_{t}(x,y)}\right) \leq C\left( 1+ \frac{d^2(x,y)}{t}\right)
\end{equation}
for all $x,y,z \in M$ and $t \in (0,T]$. Assuming $V$ is bounded, it follows that the same inequality holds for the integral kernel $p^{h,V}_t(x,y)$ of the semigroup $P^V_tf$, since
\begin{equation}
p^{h,V}_t(x,y) = p^{h}_t(x,y)\E\left[ \V_t | x_0=x, x_t =y\right]
\end{equation}
by the Feynman-Kac formula. We can therefore derive from Theorems \ref{thm:nonlocformone}, \ref{thm:nonlocformhalf} and \ref{thm:nonlocformtwo} estimates on the logarithmic derivatives of $p^{h,V}_t(x,y)$ by using Jensen's inequality (as in \cite[Lemma~6.45]{Stroock2000}). In particular, the assumptions of Theorem \ref{thm:nonlocformone} with $Z=\nabla h$ plus boundedness of $V$ and a lower bound on ${\Ric}_{m,n}$ imply the existence of a constant $C_1(T)$ such that
\begin{equation}
|d \log p^{h,V}_t(\cdot,y)_x|^2 \leq C_1(T)\left(\frac{1}{t}+\frac{d^2(x,y)}{t^2}\right)
\end{equation}
for all $x,y \in M$ and $t \in (0,T]$. The details of this (for the case $h=0$) can be found in \cite{LiThompson}. Similarly, the assumptions of Theorem \ref{thm:nonlocformhalf} with $Z=\nabla h$ plus boundedness of $V$ and a lower bound on ${\Ric}_{m,n}$ imply for the Witten Laplacian $\Delta_h := \frac{1}{2}\Delta + \nabla h$ the existence of a constant $C_2(T)$ such that
\begin{equation}
|\Delta_h \log p^{h,V}_t(\cdot,y)(x)| \leq C_2(T)\left(\frac{1}{t}+\frac{d^2(x,y)}{t^2}\right)
\end{equation}
for all $x,y \in M$ and $t \in (0,T]$. Finally, the assumptions of Theorem \ref{thm:nonlocformtwo} with $Z=\nabla h$ plus boundedness of $V$ and a lower bound on ${\Ric}_{m,n}$ imply the existence of a constant $C_3(T)$ such that
\begin{equation}
|\nabla d \log p^{h,V}_t(\cdot,y)_x| \leq C_3(T)\left(\frac{1}{t}+\frac{d^2(x,y)}{t^2}\right)
\end{equation}
for all $x,y \in M$ and $t \in (0,T]$.

\end{document}